\newtheorem{theorem}{Theorem}[section]
 \newtheorem{corollary}[theorem]{Corollary}
 \newtheorem{lemma}[theorem]{Lemma}
  \newtheorem{assumption}[theorem]{Assumption}
 \newtheorem{proposition}[theorem]{Proposition}
 \theoremstyle{Definition}
 \newtheorem{definition}[theorem]{Definition}
 \newtheorem{example}[theorem]{Example}
 \theoremstyle{remark}
 \newtheorem{remark}[theorem]{Remark}
   \newtheorem{exercise}{Exercise}
 \numberwithin{equation}{section}
\newcommand{\st}      {{\rm{s.t.}}}
\newcommand{\sign}{{\rm{sign}}}
\newcommand{\back}{\begin{acknowledgements}} \newcommand{\eack}{\end{acknowledgements}}
\newcommand{\balg}{\begin{algorithm}}    \newcommand{\ealg}{\end{algorithm}}
\newcommand{\balc}{\begin{algorithmic}}  \newcommand{\ealc}{\end{algorithmic}}
\newcommand{\bali}{\begin{aligned}}      \newcommand{\eali}{\end{aligned}}
\newcommand{\barr}{\begin{array}}        \newcommand{\earr}{\end{array}}
\newcommand{\bass}{\begin{assumption}}   \newcommand{\eass}{\end{assumption}}
\newcommand{\bbma}{\begin{bmatrix}}      \newcommand{\ebma}{\end{bmatrix}}
\newcommand{\bcas}{\begin{cases}}        \newcommand{\ecas}{\end{cases}}
\newcommand{\bcen}{\begin{center}}       \newcommand{\ecen}{\end{center}}
\newcommand{\bcol}{\begin{column}}       \newcommand{\ecol}{\end{column}}
\newcommand{\bcos}{\begin{columns}}      \newcommand{\ecos}{\end{columns}}
\newcommand{\bcon}{\begin{condition}}    \newcommand{\econ}{\end{condition}}
\newcommand{\bcor}{\begin{corollary}}    \newcommand{\ecor}{\end{corollary}}
\newcommand{\bdfn}{\begin{definition}}   \newcommand{\edfn}{\end{definition}}
\newcommand{\benu}{\begin{enumerate}}    \newcommand{\eenu}{\end{enumerate}}
\newcommand{\bequ}{\begin{equation}}     \newcommand{\eequ}{\end{equation}}
\newcommand{\benn}{\begin{equation*}}    \newcommand{\eenn}{\end{equation*}}
\newcommand{\bexa}{\begin{example}}      \newcommand{\eexa}{\end{example}}
\newcommand{\bfig}{\begin{figure}}       \newcommand{\efig}{\end{figure}}
\newcommand{\bfra}{\begin{frame}}        \newcommand{\efra}{\end{frame}}
\newcommand{\bite}{\begin{itemize}}      \newcommand{\eite}{\end{itemize}}
\newcommand{\blem}{\begin{lemma}}        \newcommand{\elem}{\end{lemma}}
\newcommand{\bmat}{\begin{matrix}}       \newcommand{\emat}{\end{matrix}}
\newcommand{\bpma}{\begin{pmatrix}}      \newcommand{\epma}{\end{pmatrix}}
\newcommand{\bpic}{\begin{picture}}      \newcommand{\epic}{\end{picture}}
\newcommand{\bpro}{\begin{proof}}        \newcommand{\epro}{\end{proof}}
\newcommand{\bprp}{\begin{proposition}}  \newcommand{\eprp}{\end{proposition}}
\newcommand{\brem}{\begin{remark}}       \newcommand{\erem}{\end{remark}}
\newcommand{\bsub}{\begin{subequations}} \newcommand{\esub}{\end{subequations}}
\newcommand{\btab}{\begin{table}}        \newcommand{\etab}{\end{table}}
\newcommand{\btar}{\begin{tabular}}      \newcommand{\etar}{\end{tabular}}
\newcommand{\bthe}{\begin{theorem}}      \newcommand{\ethe}{\end{theorem}}
\newcommand{\bvma}{\begin{vmatrix}}      \newcommand{\evma}{\end{vmatrix}}
\newcommand{\bexe}{\begin{exercise}}      \newcommand{\eexe}{\end{exercise}}
\newcommand{\beq}{\begin{equation}}
\newcommand{\eeq}{\end{equation}}
\newcommand{\bequation}{\begin{equation}}
\newcommand{\eequation}{\end{equation}}
\newcommand{\bproof}{\begin{proof}}
\newcommand{\eproof}{\end{proof}}
\newcommand{\benumerate}{\begin{enumerate}}   \newcommand{\eenumerate}{\end{enumerate}}
\newcommand{\bitem}{\begin{itemize}}
\newcommand{\eitem}{\end{itemize}}
\newcommand{\bassumption}{\begin{assumption}}
\newcommand{\eassumption}{\end{assumption}}
\newcommand{\bprop}{\begin{proposition}}
\newcommand{\eprop}{\end{proposition}}
\newcommand{\bal}{\begin{aligned}}
\newcommand{\eal}{\end{aligned}}
\newcommand{\baligned}{\begin{aligned}}
\newcommand{\ealigned}{\end{aligned}}
\newcommand{\bseq}{\begin{subequations}}
\newcommand{\eseq}{\end{subequations}}
\newcommand{\bsubequations}{\begin{subequations}}
\newcommand{\esubequations}{\end{subequations}}
\newcommand{\bcases}{\begin{cases}}
\newcommand{\ecases}{\end{cases}}
\newcommand{\bbmatrix}     {\begin{bmatrix}}
\newcommand{\ebmatrix}     {\end{bmatrix}}
\newcommand{\btheorem}{\begin{theorem}}
\newcommand{\etheorem}{\end{theorem}}
\newcommand{\Acal}{{\cal A}}
\newcommand{\Ecal}{{\cal E}}
\newcommand{\Ical}{{\cal I}}
\newcommand{\Ncal}{{\cal N}}
\newcommand{\Zcal}{{\cal Z}}
\newcommand{\hbf}{\mathbf{h}}
\newcommand{\bcls}{\begin{columns}}
\newcommand{\bcl}[1]{\begin{column}{#1\textwidth}}
\newcommand{\ecls}{\end{columns}}
\newcommand{\ecl}{\end{column}}
\title{Constrained Optimization Involving  Nonconvex $\ell_p$ Norms: Optimality Conditions, Algorithm and Convergence}
\author{Hao Wang\thanks{School of Information Science and Technology, ShanghaiTech University, Shanghai, China (E-mail: wanghao1@shanghaitech.edu.cn).},\quad Yining Gao\thanks{School of Information Science and Technology, ShanghaiTech University, Shanghai, China (E-mail: gaoyn@shanghaitech.edu.cn).},\quad Jiashan Wang\thanks{Department of Mathematics, University of Washington, Seattle, WA, USA (E-mail: jsw1119@gmail.com).},\quad and\  Hongying Liu\thanks{School of Mathematical Sciences, Beihang University, Beijing, China (E-mail: liuhongying@buaa.edu.cn).}}
\date{}
\begin{document}
\maketitle
\begin{abstract}
	This paper investigates the optimality conditions for characterizing the local minimizers of the constrained optimization problems involving an $\ell_p$ norm ($0<p<1$), which may appear in either the objective or the constraint. This kind of problem has strong applicability to a wide range of areas since the $\ell_p$ norm can promote sparse solutions. However, the nonsmooth and non-Lipschitz nature of the $\ell_p$ norm makes these problems difficult to analyze and solve. We derived the first-order necessary conditions and the sequential optimality conditions under various constraint qualifications.  We extend the iteratively reweighted algorithms for solving the unconstrained $\ell_p$ norm regularized problems to constrained cases and show the sequential optimality conditions are satisfied by this algorithm.  Global convergence is derived and the performance of the proposed algorithm is exhibited by numerical experiments. 
\end{abstract}

\section{Introduction}
 
Sparse regularization problems have attracted considerable attentions over the past decades, which 
have numerous applications in the areas including compressed sensing \cite{chartrand2007exact}, biomedical 
engineering \cite{Xu2007Lp,2018Resolution}, sensor selection \cite{2018Density} and portfolio management \cite{chen2013sparse}. 
This is because sparse solutions usually lead to better generalization performance and robustness. 
A common approach of promoting sparsity in the solution  
 is to involve a sparse-inducing term such as the $\ell_1$ norm or the $\ell_0$ norm of the variables 
 either in the objective as a penalty 
 or in the constraint. In recent years, nonconvex and/or non-Lipschitz sparsity inducing terms 
 such as the $\ell_p$ (quasi-)norm ($0<p<1$) are shown \cite{Oymak2015Sharp}  to have preferable  performance in many situations. 
 In the past decade, many works focus on designing and analyzing the algorithms for solving the unconstrained $\ell_p$ regularized problems
 \cite{Xu2010L1, ge2011note,2013Decentralized,Heng2014Optimality,2019Relating,haeser2019optimality}. 
However, when it comes to the constrained cases,  there are few works despite of its wider applicability. 
We list two examples of the constrained optimization problems involving an $\ell_p$ norm.

\emph{Example 1}  Consider the cloud radio access network (Cloud-RAN) power consumption problem~\cite{gsbf, wang2018nonconvex}, 
 which solves a group sparse problem  to induce the group sparsity for the beamformers to guide the remote radio head (RRH) selection. This group sparse problem is addressed by minimizing the mixed $\ell_p/\ell_2$-norm  with $p\in(0,1]$, yielding the following problem
\begin{equation*}
\begin{aligned}
\min_{ v }\quad & \sum_{l=1}^{L}\sqrt{\frac{\rho_l}{z_l}}\|\tilde{ v }_l\|_2^p\\
\st \quad & \sqrt{\sum_{i\neq k} \| h_k^{\sf{H}}  v _i \|_2^2 + \sigma _k^2}\leq \frac{1}{\gamma _k}\Re (h_k^{\sf{H}}  v _k)\\
& \| {\tilde{ v }}_{l}  \| _2\leq \sqrt{P_l},\ l =1,\cdots,L, k = 1,\ldots, K.
\end{aligned}\label{f.GS}
\end{equation*}
Here the Cloud-RAN architecture of this model has $L$ RRHs and $K$ single-antenna Mobile Users (MUs), where the $l$-th RRH is equipped with $N_l$ antennas. 
$ v _{lk} \in \mathbb{C}^{N_l}$ is the transmit beamforming vector from the $l$-th RRH to the $k$-th user with the group structure of transmit vectors ${\tilde{ v }}_{l}=[ v _{l1}^T,\cdots,  v _{lK}^T]^T\in \mathbb{C}^{KN_l\times 1}$. Denote the relative fronthaul link power consumption by $\rho_l$, and the inefficient of drain efficiency of the radio frequency power amplifier by $z_l$.
The channel propagation between user $k$ and RRH $l$ is denoted as $\hbf_{lk}\in \mathbb{C}^{N_l}$. 
$P_l$ is the maximum transmit power of the $l$-th RRH.
$\sigma _k$ is the noise at MU $k$.
$ {\gamma} = (\gamma_1,..., \gamma_K)^T$ is the target signal-to-interference-plus-noise ratio (SINR).
 

\emph{Example 2}. 
(The $\ell_{p}$-constrained sparse coding) In the context of sparse coding \cite{Thom2015Efficient}, the task is to reconstruct the unknown sparse code word $\bar x\in\mathbb{R}^{n}$ from the linear measurements $ {y} =  {A}\bar x +  \epsilon$, where  $ {y}\in\mathbb{R}^{m}$ represents the data with $m$ features,  $ {\epsilon}\in\mathbb{R}^{m}$ denotes the noise vector, and  $ {A}$ corresponds to the fixed dictionary that consists of $n$ atoms with respect to its columns. This problem can be formulated as 
	\begin{equation}\label{eq: lp_regression}
 	\min_{x  \in \mathbb{R}^n}~  \Vert {A}x - {y}\Vert_{2}^{2}  \qquad
	\textrm{s.t.}~  \Vert x \Vert_{p}^{p} \leq \theta,
 	\end{equation}
	where the $\ell_{p}$ ball constraint is to induce sparsity in the code word. 

In this paper, we consider  the following two general  forms of constrained nonlinear optimization with $\ell_p$ norms. The first one 
is the constrained $\ell_p$ regularized problem, meaning the $\ell_p$ norm appears in the objective as a penalty, 
\begin{equation}\label{eq.problem1}\tag{\text{$\mathscr{P}_1$}}
\min\ F(x ) := f_0(x )+ \lambda\|x \|^p_p \quad \text{ s.t. }\  f_j(x )\leq 0, \ \forall j\in\mathcal{I}; \  f_j(x )=0,\ \forall j\in\mathcal{E}. 
\end{equation}
The second one has the $\ell_p$ norm in the constraint and requires it to be smaller than a prescribed value $\theta > 0$, 
\begin{equation}\label{eq.problem2}\tag{\text{$\mathscr{P}_2$}}
\min\ f_0(x )\quad \text{ s.t. }\ \|x \|^p_p\leq \theta ;\  f_j(x )\leq 0,\ \forall j\in\mathcal{I};\  f_j(x )=0,\ \forall j\in\mathcal{E}. 
\end{equation} 
Here, $f_j:\mathbb{R}^n\to\mathbb{R}, j\in\{0\}\cup \Ical\cup \Ecal$ are continuously  
differentiable on $\mathbb{R}^n$ and  $\|x \|_p := (\sum\limits_{i=1}^n|x_i|^p)^{1/p}$  with $p\in(0,1]$.    The positive $\lambda$ is the given regularization parameter and   $ \theta$   is referred to as the radius of $\ell_p$-ball.

Despite of the advantages of nonconvex $\ell_p$ norm in promoting sparse solutions,  problems of the forms  
\eqref{eq.problem1} and \eqref{eq.problem2} are generally not easy to handle. This is largely due to the nonconvex 
and non-Lipschitz  nature of the $\ell_p$ norm which makes it difficult to characterize the optimal solutions. 
 In particular, verifiable optimality conditions are often difficult to derive, leaving it an obstacle for designing 
 efficient numerical algorithms. For example, for  \eqref{eq.problem1}, many researchers \cite{chen2010lower,lu2014iterative,wang2018nonconvex,2019Relating} tend to approximate 
 the $\ell_p$ term  by Lipschitz continuous functions and then solve for an approximate solution. 
 As for \eqref{eq.problem2},  
 to the best of our knowledge, not much has been done except the special case that only the $\ell_p$ ball 
constraint presents in the problem \cite{wang2021efficient}, meaning the projection onto the $\ell_p$ ball.

\subsection{Literature review}
The optimality conditions of the unconstrained  and inequality constrained versions of \eqref{eq.problem1} were studied 
in \cite{wang2015optimality}, which is the immediate predecessor of our work. They derived the first-order and second-order necessary conditions by assuming the ``extended'' linear independence 
constraint qualification (ELICQ) is satisfied by \eqref{eq.problem1}, meaning the LICQ is satisfied at the local minimizer in the subspace consisting of the 
nonzero variables.  They also stated that  the second-order optimality conditions can be derived by considering the reduced 
problems after fixing the zero components at a stationary point. 
   In  \cite{bian2017optimality}, 
 Bian and Chen derived a first-order necessary optimality condition using the theory of the generalized directional derivative, which is also closely related to our work. 
In particular, for the case that the constraints are all linear, Gabriel Haeser et al. \cite{haeser2019optimality}   articulated first- and second-order necessary optimality conditions for this problem based on the perturbed problem and the limits of  perturbation. Sufficient conditions for the $\epsilon$-perturbed stationary points are also presented.
As for \eqref{eq.problem2}, \cite{wang2021efficient} derived optimality conditions for the special case where only $\ell_p$ ball constraint exists using the concept 
of generalized Fr\'echet normal cone. To the best of our knowledge, there has been no study on the optimality conditions for 
more general cases of this problem.

\subsection{Contributions}

In this paper, we are interested in deriving the optimality conditions to characterize the local solutions of \eqref{eq.problem1} and \eqref{eq.problem2} 
under different constraint qualifications (CQ).  
First of all,  we analyze the basic properties of the $\ell_p$ norm and the $\ell_p$ norm ball. 
We derive the regular and general subgradients of the $\ell_p$ norm   and   
the regular and general normal of the $\ell_p$ norm ball, which indicate the 
$\ell_p$ norm is subdifferentially regular and the $\ell_p$ ball is 
Clarke regular. 
For   \eqref{eq.problem1} and \eqref{eq.problem2}, we   derive  the 
Karush-Kuhn-Tucker (KKT) conditions  and discuss the constraint qualifications that ensure that the KKT 
conditions are satisfied at a local minimizer. 
For   \eqref{eq.problem2},   we believe this is the first result.

 Recently, Andreani et al. \cite{andreani2016cone} introduced the sequential optimality conditions, namely, the approximate KKT (AKKT) conditions  for constrained smooth optimization problems, which is  
 commonly satisfied by many algorithms.  They also proposed the Cone-Continuity Property (CCP),  
 under which the AKKT conditions  implies the KKT conditions.  This is widely believed to be one of the weakest qualification under which KKT conditions  
 hold  at local minimizer.  We also define the sequential optimality conditions for   \eqref{eq.problem1} and \eqref{eq.problem2}  
 and  explore  the   constraint qualification under which the sequential conditions imply KKT conditions. 
 We believe these are much stronger results than existing ones. 
 
 To demonstrate  the applicability of the proposed sequential optimality conditions, we  extend the well-known iteratively reweighted 
 algorithms for solving unconstrained $\ell_p$-regularized problem to general constrained cases and show that those conditions 
 are satisfied at the limit  points of the generated iterates.  Therefore,  under the proposed constraint qualification, 
 the limit points  satisfy the KKT conditions.

%
%
%

 \subsection{Notation and preliminary}
 
 We use $0$ as the vector filled with all zeros of appropriate size.  
For $\Ncal \subset \{1,\ldots, n\}$ and $x \in \mathbb{R}^n$,  let  $\mathbb{R}^{|\Ncal |}$ be  the reduced subspace of $\mathbb{R}^n$ that consists of 
the components $x_i, i\in \Ncal$, and denote  $x _{\Ncal} \in \mathbb{R}^{|\Ncal |}$ as the subvector of $x $   containing  
the elements  $x_i, i\in \Ncal$.  Let $\Zcal = \{1, \ldots, n\} \setminus \Ncal$.  
For a differentiable $f: \mathbb{R}^n \to \mathbb{R}$,  let $\nabla_{\Ncal} f( x)$ be the vector consisting of 
$\nabla_i f( x), i\in \Ncal$. 
In  \eqref{eq.problem1} and \eqref{eq.problem2}, define the set   
 \begin{equation*} 
 \Gamma =\{x \ |\ f_j(x )\leq 0,\ \forall j\in\mathcal{I};\quad f_j(x )=0,\ \forall j\in\mathcal{E}\},
 \end{equation*}
and  the index set of active inequalities   by 
 $\mathcal{A}(x )=\{j\ |\ f_j(x )=0,\ j\in\mathcal{I}\}.$  
The sets of zeros and nonzeros in $x\in\mathbb{R}^n$ are defined  as 
 $$\Zcal (x )=\{i\ |\ x_i=0\}\  \text{ and }\  \Ncal(x )=\{i\ |\ x_i\neq 0\}.$$
 For simplicity,  we use shorthands  $\bar\Ncal = \Ncal(\bar x)$, $\bar\Zcal = \Zcal(\bar x)$  and $\Ncal^k = \Ncal(x^k)$, $\Zcal^k = \Zcal(x^k)$.

For $x \in\mathbb{R}^n$, define $\mathbb{B}(x ; \delta )=\{z \mid \| {z} -x \|_2 \le \delta \}$. 
For any cone $K\subset \mathbb{R}^n$,  the polar of $K$ is defined to be the cone  
$$K^* := \{ v \in\mathbb{R}^n\mid \langle  v, w \rangle\leq 0 \text{ for all } w\in{K}\}.$$
 For   $C\subset \mathbb{R}^n$, its  horizon cone is defined by 
\[ C^\infty = \begin{cases}  \{ x \mid \exists x^\nu \in C, \lambda^\nu \searrow 0, \text{ with } \lambda^\nu x^\nu \to x\}& \text{when } C\ne \emptyset, \\
\{0\} & \text{when } C=\emptyset. 
\end{cases} 
\]
Another operation on $C$ is the smallest cone containing $C$, namely the positive hull of $C$, which is defined as 
$ \text{pos }C=\{0\} \cup\{\lambda x\mid x\in C, \lambda > 0\}.$ 
A vector $w\in\mathbb{R}^n$ is tangent to a set $C\subset \mathbb{R}^n$ at $\bar  x \in C$, written $w\in T_C(\bar x)$, if 
$(x^\nu - \bar x )/\tau^\nu \to w  \text{   for some  }   x^\nu \xrightarrow[C]{} \bar x,  \tau^\nu \searrow 0.$
The interior and the boundary of a set $C\subset \mathbb{R}^n$ is denoted as $int$ $C$ and $\partial C$, respectively. 


\begin{definition}[\cite{rockafellar2009variational} Definition 6.3, 6.4] 
	\begin{enumerate}
		\item[(a)] Let $C\subset \mathbb{R}^n$ and $\bar x\in C$. A vector 
$v$ is a regular normal to $C$ at $\bar x$, written $v\in \widehat N_C(\bar x)$, if 
\[ \limsup\limits_{x\xrightarrow[C]{} \bar x, x\ne\bar x} \frac{\langle v, x-\bar x\rangle}{\|x-\bar x\|} \le 0.\]
It is a (general) normal to $C$ at $\bar x$, written $v\in N_C(\bar x)$, if 
there are sequences $x^\nu\xrightarrow[C]{} \bar x$ and $v^\nu \to v$ with $v^\nu \in \widehat N_C(x^\nu)$. 
We call $ \widehat N_C(\bar x)$ the regular normal cone and  $ N_C(\bar x)$ the normal cone to $C$. 
\item[(b)]A set $C\subset \mathbb{R}^n$ is Clarke regular at $\bar x \in C$  if it is locally closed at $\bar x$ and 
$N_C(\bar x) = \widehat N_C(\bar x)$. 
	\end{enumerate}
\end{definition} 
For a nonempty convex $C\subseteq \mathbb{R}^n$ and $\bar x \in C$,   
$\widehat N_C(\bar x) = N_C(\bar x ) = \{v \mid \langle v,  z -\bar  x  \rangle\leq 0,\text{ for all } z \in C\}.$

\begin{definition}[\cite{rockafellar2009variational} Definition 8.3]  Consider a function $f: \mathbb{R}^n \to \bar{\mathbb{R}} = \mathbb{R}\cup \{+\infty\}$ and $f(\bar x) < \infty$. For a vector $v\in\mathbb{R}^n$, one says that 
\begin{enumerate}
\item[(a)] $v$ is a regular subgradient of $f$ at $\bar x$, written $v\in \widehat\partial f(\bar x)$, if 
\[ f(x) \ge f(\bar x) + \langle v, x-\bar x\rangle + o(|x-\bar x|);\]
\item[(b)] $v$ is a (general) subgradient of $f$ at $\bar x$, written $v\in \partial f(\bar x)$, if there are sequence 
$x^\nu \xrightarrow[f]{} \bar x$ and $v^\nu \in \widehat\partial f(x^\nu)$ with $v^\nu \to v$; 
\item[(c)] $v$ is a horizon subgradient of $f$ at $\bar x$, written $v\in \partial^\infty f(\bar x)$, if there are 
sequence $x^\nu \xrightarrow[f]{} \bar x$ and $v^\nu \in \widehat\partial f(x^\nu)$ with $\lambda^\nu v^\nu \to v$ 
for some sequence $\lambda^\nu \searrow 0$. 
\end{enumerate}
\end{definition} 
For $f: \mathbb{R}^n\to\mathbb{R}$, the epigraph of $f$ is the set 
$ \text{epi } f:=\{(x,\alpha) \in \mathbb{R}^n \times \mathbb{R} \mid \alpha \ge f(x)\}.$

\begin{definition}[\cite{rockafellar2009variational} Definition 7.25] 
	 A function $f: \mathbb{R}^n\to \bar{\mathbb{R}}$ is called 
	subdifferentially regular at $\bar x$ if $f(\bar x)$ is finite and $\text{epi }f$ is Clarke regular at $(\bar x, f(\bar x))$ as 
	a subset of $\mathbb{R}^n \times \mathbb{R}$. 
	\end{definition}

\section{First-order necessary optimality conditions}
In this section, we present the   first-order necessary  optimality  conditions for \eqref{eq.problem1} and \eqref{eq.problem2}. Before proceeding to the optimality conditions, we provide some basic properties. 
\subsection{Basic Properties}
Denote  $\phi(x) = \| x \|_p^p$ and  the $\ell_p$ norm ball $\Theta  := \{ x \in \mathbb{R}^n \mid  \phi(x) \le \theta \}$. In this subsection, we provide basic properties about $\phi(x)  $ and $\Theta$. In particular, we derive regular and general subgradients of $\phi$ and the regular and the general normal cones of $\Theta$, and then show that the $\phi$ is subdifferentially regular and $\Theta$ is  Clarke regular on $\mathbb{R}^n$. 

The regular, general, and horizon subgradients of $\phi$ can be calculated as follows. 
\begin{theorem}\label{thm.regular1} For any $\bar x \in \mathbb{R}^n$,  it holds that 
\begin{align}
 \partial \phi(\bar x) = \widehat \partial \phi(\bar x) = \ & \{ v \in\mathbb{R}^n \mid v_j = \sign(x_j)p|\bar x_j|^{p-1}, \ j\in\bar\Ncal\},\label{partial phi} \\
 [\widehat \partial \phi(\bar x)]^\infty = \partial^\infty \phi(\bar x) = \ & \{ v \in\mathbb{R}^n \mid v_j = 0, \ j\in\bar\Ncal\}.\label{partial phi inf}
 \end{align}
 Therefore, $\phi$ is subdifferentially regular at every $x\in\mathbb{R}^n$. 
\end{theorem}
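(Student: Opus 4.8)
The plan is to exploit the separable structure $\phi(x)=\sum_{i=1}^n\psi(x_i)$ with $\psi(t):=|t|^p$ and to reduce the whole statement to a one-variable computation. By the subgradient calculus for fully separable functions \cite{rockafellar2009variational}, $\widehat\partial\phi(\bar x)=\prod_{i=1}^n\widehat\partial\psi(\bar x_i)$, $\partial\phi(\bar x)=\prod_{i=1}^n\partial\psi(\bar x_i)$ and $\partial^\infty\phi(\bar x)=\prod_{i=1}^n\partial^\infty\psi(\bar x_i)$, and moreover $\phi$ is subdifferentially regular at $\bar x$ if and only if $\psi$ is subdifferentially regular at each coordinate value $\bar x_i$. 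Hence it suffices to analyze $\psi$ at an arbitrary point $\bar t\in\mathbb{R}$.

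There are two cases. If $\bar t\neq 0$, then $\psi$ is $C^1$ in a neighborhood of $\bar t$ with $\psi'(\bar t)=p\,\sign(\bar t)|\bar t|^{p-1}$, so $\widehat\partial\psi(\bar t)=\partial\psi(\bar t)=\{\psi'(\bar t)\}$, $\partial^\infty\psi(\bar t)=\{0\}$, and $\psi$ is regular at $\bar t$ since smooth functions are. If $\bar t=0$, the crucial observation, and the only point where $p<1$ enters, is that $\widehat\partial\psi(0)=\mathbb{R}$: for every $v\in\mathbb{R}$ one has $(\psi(t)-\psi(0)-vt)/|t|=|t|^{p-1}-v\,\sign(t)\to+\infty$ as $t\to 0$ because $p-1<0$, so the defining inequality of a regular subgradient holds for all $v$. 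Feeding the constant sequence $t^\nu\equiv 0$ into the definition of $\partial\psi$ gives $\partial\psi(0)\supseteq\widehat\partial\psi(0)=\mathbb{R}$, hence $\partial\psi(0)=\mathbb{R}$, and rescaling elements of $\widehat\partial\psi(0)$ gives $\partial^\infty\psi(0)=\mathbb{R}$. Taking products over $i$ and identifying $\bar\Ncal$ with $\{i\mid\bar x_i\neq 0\}$ now yields \eqref{partial phi}; and \eqref{partial phi inf} follows since $\widehat\partial\phi(\bar x)=\prod_i\widehat\partial\psi(\bar x_i)$ is an affine set whose horizon (= recession) cone is exactly $\{v\mid v_j=0,\ j\in\bar\Ncal\}$, which also equals $\prod_i\partial^\infty\psi(\bar x_i)=\partial^\infty\phi(\bar x)$.

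The only genuinely nontrivial step is the subdifferential regularity of $\psi$ at $0$, that is, Clarke regularity of $\epi\,\psi$ at $(0,0)$, and I expect this to be the main obstacle; I would establish it by direct computation of the cones. First, $T_{\epi\psi}(0,0)=\{0\}\times[0,\infty)$: a tangent $(w,\beta)$ arises from points $(t^\nu,\alpha^\nu)\in\epi\,\psi$ and scalars $\tau^\nu\searrow 0$ with $t^\nu/\tau^\nu\to w$ and $\alpha^\nu/\tau^\nu\to\beta$; if $w\neq 0$ then $\alpha^\nu/\tau^\nu\ge|t^\nu/\tau^\nu|^p(\tau^\nu)^{p-1}\to+\infty$, a contradiction, while $w=0$, $\beta\ge 0$ is realized by $(t^\nu,\alpha^\nu)=(0,\beta\tau^\nu)$ and $\beta<0$ is impossible since $\alpha^\nu\ge 0$. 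Consequently $\widehat N_{\epi\psi}(0,0)=[T_{\epi\psi}(0,0)]^*=\mathbb{R}\times(-\infty,0]$. For the general normal cone, every point of $\epi\,\psi$ near $(0,0)$ is an interior point (regular normal cone $\{0\}$), a smooth boundary point $(t,|t|^p)$ with $t\neq 0$ (regular normal cone $\text{pos}\,\{(p\,\sign(t)|t|^{p-1},-1)\}$), or a point $(0,\alpha)$ with $\alpha\ge 0$ (regular normal cone $\{0\}$ if $\alpha>0$, and $\mathbb{R}\times(-\infty,0]$ if $\alpha=0$); each of these lies in the closed set $\mathbb{R}\times(-\infty,0]$, so $N_{\epi\psi}(0,0)\subseteq\mathbb{R}\times(-\infty,0]=\widehat N_{\epi\psi}(0,0)$, and the reverse inclusion is automatic. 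Thus $\epi\,\psi$ is Clarke regular at $(0,0)$, $\psi$ is subdifferentially regular on all of $\mathbb{R}$, and therefore $\phi$ is subdifferentially regular on $\mathbb{R}^n$. An alternative that sidesteps the epigraph is to verify $\partial\phi(\bar x)=\widehat\partial\phi(\bar x)$ straight from the definitions, namely $\supseteq$ via the constant sequence and $\subseteq$ by noting that perturbing only a coordinate $j\in\bar\Ncal$ forces $v_j=\psi'(\bar x_j)$ while every $v_j$ is admissible for $j\in\bar\Zcal$, but one would still owe a separate epigraph-regularity argument, so the separable reduction is the cleanest path.
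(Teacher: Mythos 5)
Your proposal is correct, and its core is the same as the paper's: both reduce to the one--variable function $\psi(t)=|t|^p$ via the separable calculus of \cite[Proposition 10.5]{rockafellar2009variational}, and both rest on the same key observation that $\widehat\partial\psi(0)=\partial\psi(0)=\mathbb{R}$ because $|t|^{p-1}\to+\infty$ as $t\to 0$. Where you genuinely diverge is in how \eqref{partial phi inf} and the regularity claim are closed out. The paper never touches the epigraph: it computes $[\widehat\partial\phi(\bar x)]^\infty$ from the explicit affine description of $\widehat\partial\phi(\bar x)$, proves $\partial^\infty\phi(\bar x)=[\widehat\partial\phi(\bar x)]^\infty$ by a direct two--sided sequence argument (taking $x^\nu$ with $\Ncal(x^\nu)=\bar\Ncal$ and $v^\nu_j=v_j/\lambda^\nu$ on the zero coordinates for one inclusion, and using $\bar\Ncal\subset\Ncal(x^\nu)$ for the other), and then invokes the subgradient criterion \cite[Corollary 8.11]{rockafellar2009variational}, by which the two equalities $\partial\phi=\widehat\partial\phi$ and $\partial^\infty\phi=[\widehat\partial\phi]^\infty$ already yield subdifferential regularity. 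You instead establish Clarke regularity of $\epi\,\psi$ at $(0,0)$ from scratch, computing $T_{\epi\psi}(0,0)=\{0\}\times[0,\infty)$, polarizing to get $\widehat N_{\epi\psi}(0,0)=\mathbb{R}\times(-\infty,0]$, and checking that the outer limit of nearby regular normal cones stays inside this set; you then let Proposition 10.5 propagate regularity to $\phi$ and upgrade the $\partial^\infty$ product inclusion to an equality. Both routes are sound; the paper's is shorter because Corollary 8.11 lets the already--computed subgradient sets do all the work, while yours is more self--contained and makes the epigraph geometry at a zero coordinate explicit. One ordering caveat: the unconditional product equality for $\partial^\infty$ (and the ``only if'' half of your regularity equivalence) is not available until the coordinatewise regularity of $\psi$ has been established, so your one--dimensional epigraph analysis must logically precede the first paragraph's calculus claims rather than follow them.
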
 
\begin{proof} 
We first consider $|\cdot |^p$ on $\mathbb{R}$.  If $\bar x\ne 0$, then $\partial |\bar x|^p = \widehat\partial |\bar x|^p = 
\{ \nabla |\bar x|^p \} = \{  \text{sign}(\bar x) p |\bar x|^{p-1} \}$. On the other hand, 
$\lim\limits_{x\to 0, x\ne 0}\frac{|x|^p}{|x|} = + \infty$, implying 
$\liminf\limits_{x\to 0, x\ne 0}\frac{|x|^p - |0|^p - v(x-0)}{|x-0|} \ge 0$ for any $v\in\mathbb{R}$. 
Therefore, if $\bar x=0$,  it follows from the definition of regular subgradient that 
  $\mathbb{R}\subset \widehat\partial |0|^p \subset \partial |0|^p \subset \mathbb{R}$. Hence, 
  $\widehat\partial |0|^p = \partial |0|^p =   \mathbb{R}$.  
 By [\cite{rockafellar2009variational} Proposition 10.5], we have  
 \[ \widehat\partial \phi(\bar x) = \partial \phi(\bar x) = \partial |\bar x_1|^p \times \ldots \times  \partial |\bar x_n|^p =   \{ v \in\mathbb{R}^n \mid v_j = \text{sign}(\bar x_j)p|\bar x_j|^{p-1}, \ j\in\bar\Ncal\}.\]
 This proves  \eqref{partial phi}. 
 
 By the definition of horizon cone and \eqref{partial phi}, it is obvious that $ [\widehat \partial \phi(\bar x)]^\infty = \{ v \in\mathbb{R}^n \mid v_j = 0, \ j\in\bar\Ncal\}$.  We next prove 
 $ \partial^\infty \phi(\bar x)  =  [\widehat \partial \phi(\bar x)]^\infty$. 
 
 For any $v \in [\widehat \partial \phi(\bar x)]^\infty$ and $\{\lambda^\nu\}\searrow 0$,   we can select sequence $x^\nu \xrightarrow[\phi]{} \bar x$   
 such that   $\Ncal(x^\nu) = \bar \Ncal$. Let $v^\nu_j = v_j/\lambda^\nu$.  
From \eqref{partial phi},  this means $v^\nu \in \widehat\partial \phi(x^\nu)$ and $\lambda^\nu v^\nu\to v$. 
Therefore, $v\in \partial^\infty \phi(x)$. 
  
On the other hand, for $x^\nu$ sufficiently  close to $\bar x$,  it holds that $\bar \Ncal \subset \Ncal(x^\nu)$. 
Therefore, by  \eqref{partial phi},   
$v_j^\nu = \text{sign}(\bar x_j)p|\bar x_j^\nu|^{p-1}, j\in \bar \Ncal$ for any $v^\nu \in \widehat\partial \phi(x^\nu)$. 
 Hence, for any sequence $\{\lambda^\nu\} \searrow 0$,  $\lambda^\nu v_j^\nu \to 0$,  
 $j \in \bar \Ncal$,  it holds that  $v_j = 0, j\in \bar \Ncal$ for any $v\in \partial^\infty \phi(\bar x)$, or, equivalently,  
 $\partial^\infty \phi(\bar x)  \subset  [\widehat \partial \phi(\bar x)]^\infty$.  Overall, we have shown 
 that $\partial^\infty \phi(\bar x) =   [\widehat \partial \phi(\bar x)]^\infty$. 
 
 It then follows from \cite[Corollary 8.11]{rockafellar2009variational} that $\phi$ is  subdifferentially regular at any $x\in\mathbb{R}^n$. 
 \end{proof}

The regular and general normal vectors can be calculated as follows. 

\begin{theorem} \label{thm.regular2}
For   any $\bar x\in \Theta$, $N_\Theta(\bar x) = \text{pos }\partial \phi(\bar x) \cup \partial^\infty \phi(\bar x)$, i.e.,
 \begin{equation}\label{normal cones1} N_\Theta(\bar x) =   \begin{cases} 
  \{  v \in\mathbb{R}^n \mid v_j = \lambda \sign(\bar x_j)p|\bar x_j|^{p-1},  j\in\bar \Ncal; \  \lambda  \ge 0\} & \text{if }\bar  x\in\partial \Theta,\\
  \{0\} & \text{if } \bar x\in \text{int } \Theta.
  \end{cases}\end{equation} 
  Furthermore,  $\Theta$ is  Clarke regular at any $\bar x\in \Theta$, i.e., $\widehat N_\Theta(\bar x) = N_\Theta(\bar x)$. 
\end{theorem}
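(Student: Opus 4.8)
The plan is to reduce to the case $\bar x\in\partial\Theta$, compute the tangent cone $T_\Theta(\bar x)$ explicitly, polarize it to obtain $\widehat N_\Theta(\bar x)$, and then pass to the limit in the regular normal cones at nearby points to recover $N_\Theta(\bar x)$ and deduce Clarke regularity. Since $\theta>0$ and $\phi$ is continuous with $\phi(0)=0$, the set $\Theta$ is closed, $\interior\Theta=\{x\mid\phi(x)<\theta\}$ and $\partial\Theta=\{x\mid\phi(x)=\theta\}$ (for the latter, if $\phi(\bar x)=\theta$ then $\bar x\ne 0$ and $\phi(s\bar x)=s^p\theta>\theta$ for $s>1$). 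If $\bar x\in\interior\Theta$, then $\widehat N_\Theta(\bar x)=N_\Theta(\bar x)=\{0\}$ and we are done; so assume $\bar x\in\partial\Theta$, whence $\phi(\bar x)=\theta>0$ and $\bar\Ncal\ne\emptyset$. On a small ball about $\bar x$ we split $\phi(x)=g(x_{\bar\Ncal})+h(x_{\bar\Zcal})$, where $g(y)=\sum_{j\in\bar\Ncal}|y_j|^p$ is $C^1$ near $\bar x_{\bar\Ncal}$ with $\nabla g(\bar x_{\bar\Ncal})=(\sign(\bar x_j)p|\bar x_j|^{p-1})_{j\in\bar\Ncal}\ne 0$, while $h(z)=\|z\|_p^p\ge 0$ and $h(0)=0$.

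The first step is to show
\[
T_\Theta(\bar x)=\{\,w\in\Rn\mid w_{\bar\Zcal}=0,\ \langle\nabla g(\bar x_{\bar\Ncal}),w_{\bar\Ncal}\rangle\le 0\,\}.
\]
For ``$\subseteq$'', let $x^\nu\xrightarrow[\Theta]{}\bar x$, $\tau^\nu\searrow 0$ with $(x^\nu-\bar x)/\tau^\nu\to w$; from $g(x^\nu_{\bar\Ncal})\le\phi(x^\nu)\le\theta$ and a first-order expansion of $g$ we get $\langle\nabla g,w_{\bar\Ncal}\rangle\le 0$, and moreover $0\le h(x^\nu_{\bar\Zcal})\le\theta-g(x^\nu_{\bar\Ncal})=O(\tau^\nu)$; but $x^\nu_{\bar\Zcal}=\tau^\nu(w_{\bar\Zcal}+o(1))$ gives $h(x^\nu_{\bar\Zcal})=(\tau^\nu)^p(\|w_{\bar\Zcal}\|_p^p+o(1))$, so dividing by $(\tau^\nu)^p$ and using $0<p<1$ forces $\|w_{\bar\Zcal}\|_p^p\le O((\tau^\nu)^{1-p})\to 0$, i.e.\ $w_{\bar\Zcal}=0$. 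For ``$\supseteq$'', if $w_{\bar\Zcal}=0$ and $\langle\nabla g,w_{\bar\Ncal}\rangle<0$, then $\phi(\bar x+tw)=g(\bar x_{\bar\Ncal}+tw_{\bar\Ncal})=\theta+t\langle\nabla g,w_{\bar\Ncal}\rangle+o(t)<\theta$ for small $t>0$, so $\bar x+tw\in\Theta$ and $w\in T_\Theta(\bar x)$; a general $w$ in the right-hand set is a limit of directions $w+\eps u$ with $u_{\bar\Zcal}=0$ and $\langle\nabla g,u_{\bar\Ncal}\rangle<0$ (e.g.\ $u_{\bar\Ncal}=-\nabla g$), each of which is tangent, and $T_\Theta(\bar x)$ is closed.

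Next, $\widehat N_\Theta(\bar x)=T_\Theta(\bar x)^*$ by [\cite{rockafellar2009variational}, Theorem 6.28]. Writing $T_\Theta(\bar x)=L\cap H$ with $L=\{w\mid w_{\bar\Zcal}=0\}$ a subspace and $H=\{w\mid\langle\bar a,w\rangle\le 0\}$ where $\bar a_{\bar\Ncal}=\nabla g(\bar x_{\bar\Ncal})\ne 0$ and $\bar a_{\bar\Zcal}=0$, polyhedrality gives $T_\Theta(\bar x)^*=L^*+H^*=\{v\mid v_{\bar\Ncal}=0\}+\{\lambda\bar a\mid\lambda\ge 0\}$, which is exactly the right-hand side of \eqref{normal cones1}; comparing with the expressions for $\partial\phi(\bar x)$ and $\partial^\infty\phi(\bar x)$ from Theorem \ref{thm.regular1} shows this set also equals $\text{pos}\,\partial\phi(\bar x)\cup\partial^\infty\phi(\bar x)$. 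For Clarke regularity it remains to prove $N_\Theta(\bar x)\subseteq\widehat N_\Theta(\bar x)$, the reverse being automatic. Any $v\in N_\Theta(\bar x)$ is, by definition, a limit of $v^\nu\in\widehat N_\Theta(x^\nu)$ with $x^\nu\xrightarrow[\Theta]{}\bar x$; for $\nu$ large $\bar\Ncal\subseteq\Ncal(x^\nu)$, and either $x^\nu\in\interior\Theta$ (so $v^\nu=0$) or $x^\nu\in\partial\Theta$, in which case the formula just proved, now applied at $x^\nu$, gives $\lambda^\nu\ge 0$ with $v^\nu_j=\lambda^\nu\sign(x^\nu_j)p|x^\nu_j|^{p-1}$ for all $j\in\Ncal(x^\nu)$. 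Fixing $j_0\in\bar\Ncal$, $\lambda^\nu=v^\nu_{j_0}/(\sign(\bar x_{j_0})p|x^\nu_{j_0}|^{p-1})\to\bar\lambda\ge 0$ because the denominator tends to the nonzero limit $\sign(\bar x_{j_0})p|\bar x_{j_0}|^{p-1}$; passing to the limit in $v^\nu_j=\lambda^\nu\sign(\bar x_j)p|x^\nu_j|^{p-1}$ over $j\in\bar\Ncal$ (with the components $v_j$, $j\in\bar\Zcal$, unrestricted) places $v$ in $\widehat N_\Theta(\bar x)$.

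I expect the crux to be the ``$\subseteq$'' half of the tangent-cone computation: this is precisely where the non-Lipschitz behaviour of $\phi$ is used (the term $t^p$ overwhelming $t$ as $t\downarrow 0$ when $p<1$), and it is what ``freezes'' the zero components, distinguishing the $\ell_p$ ball with $0<p<1$ from the $\ell_1$ ball. A secondary technical point is the convergence of the multipliers $\lambda^\nu$ in the last step, which works because $|x^\nu_j|^{p-1}$ stays bounded away from $0$ and $\infty$ for $j\in\bar\Ncal$.
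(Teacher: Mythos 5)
Your proof is correct, but it follows a genuinely different route from the paper's. The paper's proof of \Cref{thm.regular2} is essentially a one-step appeal to \cite[Proposition 10.3]{rockafellar2009variational} on normal cones to level sets $\{x \mid \phi(x)\le\theta\}$: having already computed $\partial\phi(\bar x)$ and $\partial^\infty\phi(\bar x)$ and established subdifferential regularity of $\phi$ in \Cref{thm.regular1}, and noting that $0\notin\partial\phi(\bar x)$ when $\bar x\in\partial\Theta$ (since $\bar x\ne 0$), that proposition delivers $\widehat N_\Theta(\bar x)=N_\Theta(\bar x)=\text{pos}\,\partial\phi(\bar x)\cup\partial^\infty\phi(\bar x)$ and Clarke regularity simultaneously; the explicit formula \eqref{normal cones1} then follows by writing out the positive hull. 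You instead argue from first principles: you compute $T_\Theta(\bar x)$ directly, polarize it via $\widehat N_\Theta(\bar x)=T_\Theta(\bar x)^*$ and polyhedral cone calculus, and prove $N_\Theta(\bar x)\subseteq\widehat N_\Theta(\bar x)$ by a bare-hands limit of the multipliers $\lambda^\nu$, which converge because $|x^\nu_{j_0}|^{p-1}$ stays bounded away from $0$ and $\infty$ for $j_0\in\bar\Ncal$. Your key step --- the comparison of $(\tau^\nu)^p\|w_{\bar\Zcal}\|_p^p$ against $O(\tau^\nu)$ forcing $w_{\bar\Zcal}=0$ --- is exactly where the non-Lipschitz character of $\phi$ enters, and it is the elementary counterpart of the horizon-subgradient condition hidden inside Proposition 10.3. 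What your approach buys is self-containedness, an explicit tangent cone formula (which the paper never states), and a transparent explanation of why the zero components are ``frozen''; what the paper's approach buys is brevity and reuse of the machinery already built for \Cref{thm.regular1}. Two cosmetic points only: double-check the numbering of the tangent--normal polarity result in \cite{rockafellar2009variational} (it is standard but your ``Theorem 6.28'' may not be the right label), and in the final limit argument you should note explicitly that $\sign(x^\nu_j)=\sign(\bar x_j)$ for $j\in\bar\Ncal$ and $\nu$ large; neither affects correctness.
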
 

\begin{proof}  We only prove the case that $\bar x\in\partial \Theta$ since the other is trivial. We have $\bar x \neq 0$ and $0\not\in\partial \phi (\bar x)$. Together with \Cref{thm.regular1} and \cite[Proposition 10.3]{rockafellar2009variational}, it holds that 
\[\widehat N_\Theta(\bar x) = N_\Theta(\bar x) = \text{pos }\partial \phi(\bar x) \cup \partial^\infty \phi(\bar x) \]
and $\Theta$ is Clarke regular at $\bar x$.
By the definition of \text{pos} and \eqref{partial phi},
 \[  \text{pos }\partial \phi(\bar x) = \{0\}\cup \{ \lambda v \in\mathbb{R}^n \mid v_j = \text{sign}(\bar x_j)p|\bar x_j|^{p-1},  j\in\bar \Ncal;  \  \lambda  >  0\}. \]
Therefore, it holds that 
\[ N_\Theta(\bar x) = \text{pos }\partial \phi(\bar x) \cup \partial^\infty \phi(\bar x) = \{   v \in\mathbb{R}^n \mid v_j = \lambda \text{sign}(\bar x_j)p|\bar x_j|^{p-1},  j\in\bar \Ncal; \  \lambda  \ge  0\}. \]
%
 \end{proof}

From \cite[Theorem 8.15]{rockafellar2009variational}, we have the following first-order necessary condition for \eqref{eq.problem1}. 
 For \eqref{eq.problem2}, we only focus on the local minimizers $\bar x$ on the boundary of the $\ell_p$ ball, i.e., $\|\bar x\|_p^p = \theta$; otherwise, the characterization of local minimizers 
 reverts to the case of traditional constrained nonlinear problems. 

\begin{theorem} \label{basic.thm}  
Suppose $f_0$ is differentiable over $\Gamma$.  The following statements hold true. 
\begin{enumerate}
\item[(a)] If $\partial^\infty \phi(\bar x)$ contains no vector $v\ne 0$ such that 
$-v\in N_\Gamma(\bar x)$, then a necessary condition for $\bar x $ to be locally optimal for \eqref{eq.problem1} is 
\begin{equation}\label{lpc.uncons}
 0 \in \nabla f_0(\bar x) +\lambda \partial \phi(\bar x) + N_\Gamma(\bar x).
 \end{equation}
\item[(b)] 
	Suppose that $\bar x$ is a local minimizer of \eqref{eq.problem2} with $\bar x\in \partial \Theta$. Then 
	\begin{equation}\label{lpc:geometric}
	 -\nabla f_0(\bar x )\in \widehat N_{\Theta\cap\Gamma}(\bar x) \subset N_{\Theta\cap\Gamma}(\bar x).
	 \end{equation}
\end{enumerate}
 \end{theorem}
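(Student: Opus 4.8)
The plan is to reduce each part to standard variational-analysis calculus from \cite{rockafellar2009variational}, using the subgradient formulas for $\phi$ from \Cref{thm.regular1} only to identify the relevant horizon cone.

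For part (a), I would first recast \eqref{eq.problem1} as the unconstrained minimization of $F+\delta_\Gamma$, where $\delta_\Gamma$ is the indicator of $\Gamma$; note $\Gamma$ is closed (each $f_j$ is continuous), so $\delta_\Gamma$ is proper and lsc, and $F=f_0+\lambda\phi$ is continuous. Since $\bar x$ is a local minimizer, Fermat's rule gives $0\in\widehat\partial(F+\delta_\Gamma)(\bar x)\subseteq\partial(F+\delta_\Gamma)(\bar x)$. I then apply the sum rule to $f_0+\lambda\phi+\delta_\Gamma$: the smooth term $f_0$ contributes $\nabla f_0(\bar x)$ with horizon subgradient set $\{0\}$, the horizon subgradient set of $\lambda\phi$ at $\bar x$ equals $\partial^\infty\phi(\bar x)$ because $\lambda>0$, and that of $\delta_\Gamma$ at $\bar x$ equals $N_\Gamma(\bar x)$. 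Hence the qualification demanded by the sum rule — that the only choice of horizon subgradients of the three summands adding to $0$ is the trivial one — is precisely the hypothesis that $\partial^\infty\phi(\bar x)$ contains no $v\neq 0$ with $-v\in N_\Gamma(\bar x)$. With that, $\partial(F+\delta_\Gamma)(\bar x)\subseteq\nabla f_0(\bar x)+\lambda\partial\phi(\bar x)+N_\Gamma(\bar x)$, which together with Fermat's rule yields \eqref{lpc.uncons}.

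For part (b), since $\Theta=\{x\mid\phi(x)\le\theta\}$ and $\Gamma$ are both closed, $\Theta\cap\Gamma$ is closed, hence locally closed at $\bar x$. As $f_0$ is differentiable at $\bar x$ and $\bar x$ locally minimizes $f_0$ over $\Theta\cap\Gamma$, the basic first-order optimality condition relative to a set (\cite[Theorem 8.15]{rockafellar2009variational}) gives $-\nabla f_0(\bar x)\in\widehat N_{\Theta\cap\Gamma}(\bar x)$; the inclusion $\widehat N_{\Theta\cap\Gamma}(\bar x)\subseteq N_{\Theta\cap\Gamma}(\bar x)$ is immediate from the definition of the general normal cone, giving \eqref{lpc:geometric}. (The assumption $\bar x\in\partial\Theta$ is only used to delineate the interesting case; the argument itself does not need it.)

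The routine pieces are the identifications $\partial^\infty(\lambda\phi)(\bar x)=\partial^\infty\phi(\bar x)$, $\partial^\infty\delta_\Gamma(\bar x)=N_\Gamma(\bar x)$, and $\partial\delta_\Gamma(\bar x)=N_\Gamma(\bar x)$. The one step that needs genuine care is in part (a): checking that the horizon-cone constraint qualification in the sum rule coincides verbatim with the stated hypothesis — in particular that smoothness of $f_0$ lets it be dropped from the qualification, and that no Clarke regularity of $\Gamma$ is needed for the one-sided inclusion (regularity, combined with the subdifferential regularity of $\phi$ from \Cref{thm.regular1}, would only be required to upgrade the inclusion to an equality, which is not claimed here). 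Part (b) is essentially a direct application of the cited theorem.
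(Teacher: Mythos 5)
Your proposal is correct and follows essentially the same route as the paper, which offers no written proof beyond citing \cite[Theorem 8.15]{rockafellar2009variational}: part (b) is the direct set-constrained first-order condition, and part (a) is Fermat's rule plus the subdifferential sum rule, whose horizon-cone qualification (with $\partial^\infty\delta_\Gamma(\bar x)=N_\Gamma(\bar x)$ and $\partial^\infty f_0(\bar x)=\{0\}$) is exactly the stated hypothesis. Your write-up correctly supplies the calculus details the paper leaves implicit, including the observation that only the one-sided inclusion is needed so no regularity of $\Gamma$ is required.
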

\begin{figure}[htbp]
  \centering
  \includegraphics[width=0.6\linewidth]{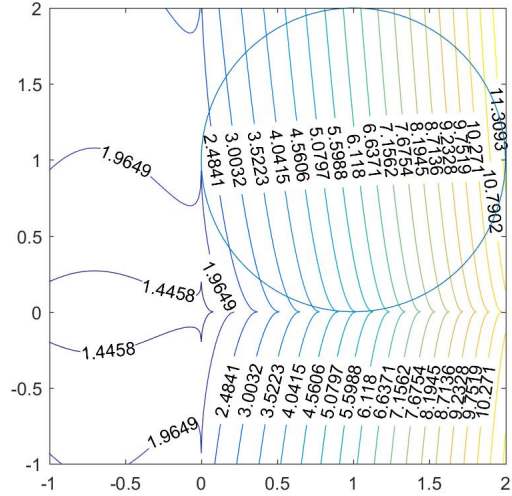}
  \caption{A conterexample for \Cref{basic.thm}.For \eqref{eq.problem1}, the contour of $\ell_p\ (p = 0.5)$ regularization problem with $F(x) = (x_1+1)^2  + \|x\|^p_p$ and $\Gamma=\{(x_1-1)^2+(x_2-1)^2\leq 1\}$. } 
\end{figure}  
Obviously, to find an optimal solution, we can only focus on the top semicircle of the given ball, so that the original 
problem is equivalent to 
  $$\min\ (x_1+1)^2+\sqrt{x_1}+\sqrt{1-\sqrt{-x_1^2+2x_1}} \text{ s.t. } 0 \leq x_1 \leq 2$$
The derivative over the domain is always positive, therefore $x^*=(0,1)$ is a global minimizer. However,  there exist $v\in\{\nu\in\mathbb{R}^2\ |\ \nu_1>0,\nu_2=0\}$, such that $\partial^\infty(\bar x)\ni v \neq 0$ and $-v \in N_\Gamma(\bar x)$. In this case,  one can see 
\eqref{lpc.uncons} does not hold at $x^*$. 
 

 \subsection{Optimality conditions for \eqref{eq.problem1}}

To make condition \eqref{lpc.uncons} for \eqref{eq.problem1} 
 informative, we need to clarify when $-v\in N_\Gamma(\bar x)$ happens and 
 how to calculate the elements in $N_\Gamma(\bar x)$.  
 For this purpose, we define the following extended 
Mangasarian-Fromovitz constraint qualification (EMFCQ). The EMFCQ holds at   $\bar x \in \Gamma$ for $\Gamma$ if the subvectors $\nabla_{\bar\Ncal} f_j(\bar x)  $, $j \in\Ecal\cup \bar\Acal$ are linearly independent and there exists $ {d}\in\mathbb{R}^{|\bar\Ncal|}$ such that 
	\begin{equation}\label{mfcq.condition} 
		\begin{aligned}
			\langle \nabla_{\bar \Ncal} f_j(\bar x), {d}\rangle =0,\ j\in\mathcal{E}\ \text{ and } \  \langle \nabla_{\bar \Ncal} f_j(\bar x), {d}\rangle <0,\ j\in\bar\Acal.
		\end{aligned}		
	\end{equation}
 Obviously, the EMFCQ is a weaker condition than the ELICQ proposed in \cite{wang2015optimality}. Moreover, 
 if the EMFCQ  holds at $\bar x\in\Gamma$ for $\Gamma$, then the 
MFCQ   holds naturally  true at  $\bar x$ for $\Gamma$; 
letting   \begin{equation}\label{normal mfcq}
\Lambda_\Gamma(\bar x) = \{  \sum_{j\in\Ecal\cup\bar\Acal } y_j \nabla f_j(\bar x) \mid y_j \ge 0, j \in \bar \Acal\}, 
  \end{equation} 
 we have from   \cite[Theorem 6.14]{rockafellar2009variational} 
 that $\Gamma$ is regular at $\bar x$ and $N_\Gamma(\bar x) = \Lambda_\Gamma(\bar x)$.

  \begin{theorem}\label{optimality.p1} 
 \begin{enumerate}
 \item[(a)]  Suppose the EMFCQ is satisfied  at   $\bar x\in\Gamma$ for $\Gamma$. Then  $\partial^\infty \phi(\bar x)$ contains no vector $v\ne 0$ such that 
$-v\in N_\Gamma(\bar x)$.  
Furthermore,  a necessary condition for $\bar x $ to be locally optimal for \eqref{eq.problem1} is that there exist $y_j,\ j\in\Ecal$ and $y_j\geq 0,\ j\in\Acal$ such that
\begin{equation}\label{kkt1 for p1}
   \nabla_i f_0(\bar x) + \lambda p\sign(\bar x_i)|\bar x_i|^{p-1} + \sum_{j\in\Ecal\cup\bar\Acal } y_j \nabla_i f_j(\bar x) = 0, \ i\in\bar{\Ncal}.
   \end{equation}
   \item[(b)]  Suppose $\Gamma$ is closed and convex in \eqref{eq.problem1}. If $\bar x$ is a local minimizer of \eqref{eq.problem1} 
   and  $\partial^\infty \phi(\bar x)$ contains no vector $v\ne 0$ such that 
$-v\in N_\Gamma(\bar x)$,    
  then it holds that 
\begin{equation}\label{eq.convexset} 
  \nabla_i f_0(\bar x) + \lambda p\sign(\bar x_i) |\bar x_i|^{p-1} + v_i = 0,  i \in \bar\Ncal;  \   v\in N_\Gamma(\bar x).
 \end{equation}

   \end{enumerate}
 \end{theorem}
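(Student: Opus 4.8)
The plan is to reduce both parts to \Cref{basic.thm}(a) and then turn the abstract inclusion \eqref{lpc.uncons} into the stated componentwise identities by substituting the subgradient formula \eqref{partial phi}. Two ingredients must be supplied: (i) the non-degeneracy hypothesis of \Cref{basic.thm}(a), that $\partial^\infty\phi(\bar x)$ contains no nonzero $v$ with $-v\in N_\Gamma(\bar x)$, and (ii) a usable description of $N_\Gamma(\bar x)$. For part (a) the EMFCQ supplies both; for part (b) ingredient (i) is assumed and convexity of $\Gamma$ supplies (ii).

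For part (a), I would first verify the non-degeneracy condition. By \Cref{thm.regular1} every $v\in\partial^\infty\phi(\bar x)$ has $v_{\bar\Ncal}=0$. Since the EMFCQ implies the MFCQ at $\bar x$ for $\Gamma$, the discussion preceding the theorem gives that $\Gamma$ is regular at $\bar x$ with $N_\Gamma(\bar x)=\Lambda_\Gamma(\bar x)$ as in \eqref{normal mfcq}. Hence if some $v\ne 0$ in $\partial^\infty\phi(\bar x)$ had $-v\in N_\Gamma(\bar x)$, we could write $-v=\sum_{j\in\Ecal\cup\bar\Acal}y_j\nabla f_j(\bar x)$ with $y_j\ge 0$ for $j\in\bar\Acal$; restricting to the $\bar\Ncal$-coordinates and using $v_{\bar\Ncal}=0$ gives $\sum_{j\in\Ecal\cup\bar\Acal}y_j\nabla_{\bar\Ncal}f_j(\bar x)=0$, and the linear-independence clause of the EMFCQ then forces all $y_j=0$, so $v=0$, a contradiction. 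With the hypothesis of \Cref{basic.thm}(a) verified, that theorem yields \eqref{lpc.uncons}; substituting $N_\Gamma(\bar x)=\Lambda_\Gamma(\bar x)$ and \eqref{partial phi} into \eqref{lpc.uncons} and reading off the coordinates in $\bar\Ncal$ gives exactly \eqref{kkt1 for p1}.

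For part (b) the non-degeneracy hypothesis is assumed outright, so \Cref{basic.thm}(a) again gives \eqref{lpc.uncons}. Because $\Gamma$ is closed and convex, $N_\Gamma(\bar x)$ is the ordinary convex normal cone and requires no further qualification; unwinding \eqref{lpc.uncons} as the existence of $g\in\partial\phi(\bar x)$ and $v\in N_\Gamma(\bar x)$ with $\nabla f_0(\bar x)+\lambda g+v=0$, and restricting again to $i\in\bar\Ncal$, where $g_i=p\,\sign(\bar x_i)|\bar x_i|^{p-1}$ by \eqref{partial phi}, gives \eqref{eq.convexset}.

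The step I expect to need the most care is the bookkeeping between the coordinates in $\bar\Ncal$ and those in $\bar\Zcal$. The inclusion \eqref{lpc.uncons} places no restriction whatsoever on the $\bar\Zcal$-coordinates, since $\partial\phi(\bar x)$ equals $\mathbb{R}$ in each such coordinate by \eqref{partial phi}; one must therefore argue that the entire content of \eqref{lpc.uncons} is captured by the $\bar\Ncal$-equations, and, in part (a), that this is exactly why the EMFCQ's linear-independence clause, stated only for the subvectors $\nabla_{\bar\Ncal}f_j(\bar x)$, suffices to eliminate the candidate horizon vectors. Beyond this, the proof is a routine substitution once \Cref{basic.thm}, \Cref{thm.regular1}, and the consequence $N_\Gamma(\bar x)=\Lambda_\Gamma(\bar x)$ of the MFCQ are in hand.
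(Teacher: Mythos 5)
Your proof is correct and follows essentially the same route as the paper's: verify the non-degeneracy hypothesis of Theorem~\ref{basic.thm}(a) from the EMFCQ by a contradiction argument on the multipliers (using that any $v\in\partial^\infty\phi(\bar x)$ vanishes on $\bar\Ncal$), then substitute $N_\Gamma(\bar x)=\Lambda_\Gamma(\bar x)$ and the subgradient formula \eqref{partial phi} into \eqref{lpc.uncons} and read off the $\bar\Ncal$-coordinates, with part (b) following directly from Theorem~\ref{basic.thm}(a) and the convex normal cone. The only cosmetic difference is that you eliminate the multipliers via the linear-independence clause of the EMFCQ, whereas the paper invokes the dual (Gordan-type) form of condition \eqref{mfcq.condition}; under the paper's stated definition of EMFCQ both yield $y_j=0$ for all $j\in\Ecal\cup\bar\Acal$ and hence $v=0$.
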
 
 
 \begin{proof} (a) 
Assume by contradiction that there exists nonzero $v\in  \partial^\infty \phi(\bar x)$ such that $-v\in N_\Gamma(\bar x)$;  then 
it follows from \Cref{thm.regular1} and \eqref{normal mfcq} that 
 \begin{equation}\label{tmp.mfc}    \sum_{j\in\Ecal\cup\bar\Acal}  y_j \nabla_{\bar \Ncal} f_j(\bar x) = -v_{\bar\Ncal} = 0;    \sum_{j\in\Ecal\cup\bar\Acal}  y_j \nabla_{\bar \Zcal} f_j(\bar x) = -v_{\bar\Zcal} \ne 0; \  y_j\ge0, j\in\bar\Acal.
 \end{equation} 
 Since EMFCQ holds true at $\bar x\in \Gamma$,  the dual form   \cite{Solodov10} of condition \eqref{mfcq.condition} tells that  $y_j=0, j\in\Ecal\cup\bar\Acal$ is the {unique} solution 
 of the  the system 
 \[   \sum_{j\in\Ecal\cup\bar\Acal}  y_j \nabla f_j(\bar x) = 0,\  y_j\ge0, j\in\bar\Acal.\]
It follows that $ \sum_{j\in\Ecal\cup\bar\Acal}  y_j \nabla_{\bar \Zcal} f_j(\bar x) = 0$, contradicting \eqref{tmp.mfc}. 
Therefore, for any nonzero $v\in \partial^\infty \phi(\bar x)$,  $-v\notin N_\Gamma(\bar x)$. 
 From \Cref{basic.thm}, at a local optimal solution  $\bar x$ of  \eqref{eq.problem1}, \eqref{kkt1 for p1} is satisfied.

(b)  This is trivially true  from    \Cref{basic.thm}.  
%
 \end{proof}

 We call the conditions \eqref{kkt1 for p1} the   Karush-Kuhn-Tucker (KKT) conditions for \eqref{eq.problem1}.  Using the notation of $\Lambda_\Gamma$, 
 it can also be equivalently written as 
 \begin{equation}\label{kkt.1.lambda}
 - \nabla f_0(\bar x) \in \lambda\partial \phi(\bar x)+ \Lambda_\Gamma(\bar x).
 \end{equation}

\subsection{Optimality conditions for \eqref{eq.problem2}}

 We also consider other verifiable forms of condition \eqref{lpc:geometric} if some constraint qualification is satisfied at $\bar x$. 
%
 For $\bar x \in \Theta \cap\Gamma$, define   the extended linearized cone $\Upsilon_{\Theta\cap \Gamma}(\bar x)$ as: 
\[  
\Upsilon_{\Theta\cap\Gamma}(\bar x) :=  \{ d \in \mathbb{R}^n \mid \langle v, d  \rangle \le 0, \forall v\in\partial \phi(\bar x);   
\langle \nabla  f_j(\bar x), d  \rangle = 0, j\in\Ecal; \ 
\langle \nabla f_j(\bar x), d  \rangle \le 0, j\in \bar\Acal\}. 
\]
Obviously,  
\[
\begin{aligned}
N_\Theta(\bar x) + \Lambda_\Gamma(\bar x) = &   \Upsilon_{\Theta\cap\Gamma}(\bar x)^* \\ 
 = &  \{ v \in \mathbb{R}^n \mid v_i =  y_0p\sign(\bar x_i) |\bar x_i|^{p-1}  +  \sum_{ j \in\bar\Acal\cup\Ecal } y_j \nabla_i f_j(\bar x), i\in\bar\Ncal; \   y_j \ge 0, j\in \{0\}\cup\bar \Acal \}.
\end{aligned} \]
It follows from  \cite[Theorem 6.14]{rockafellar2009variational} that 
		 $\Upsilon_{\Theta\cap\Gamma}(\bar x)^*     \subset   \widehat N_{{\Theta}\cap\Gamma}(\bar x ).$  
		 Hence, we have the following result. 
 \begin{proposition}\label{suff for nec} For $\bar x \in \Gamma \cap \Theta $ with $\bar x \in \partial \Theta$, 
$ \Upsilon_{\Theta\cap\Gamma}(\bar x)^*\subset \widehat N_{\Theta\cap\Gamma}(\bar x)  $. Therefore, if $-\nabla f_0(\bar x)\in \Upsilon_{\Theta\cap\Gamma}(\bar x)^*$, meaning that there exist $y_j \ge 0, j\in \{0\}\cup\bar \Acal $  such that 
\begin{equation*}  \nabla f_0(\bar x) +  y_0p\text{sign}(\bar x_i) |\bar x_i|^{p-1}  +  \sum_{ j \in\bar\Acal\cup\Ecal } y_j \nabla_i f_j(\bar x) = 0, i\in\bar\Ncal,   
 \end{equation*}
then the first-order necessary condition \eqref{lpc:geometric} is satisfied at $\bar x$. 
\end{proposition}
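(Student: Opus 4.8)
The plan is to reduce the proposition to the single set inclusion $\Upsilon_{\Theta\cap\Gamma}(\bar x)^* \subset \widehat N_{\Theta\cap\Gamma}(\bar x)$, after which everything else is formal. Indeed $\widehat N_{\Theta\cap\Gamma}(\bar x) \subset N_{\Theta\cap\Gamma}(\bar x)$ holds by definition, so any vector of $\Upsilon_{\Theta\cap\Gamma}(\bar x)^*$ — in particular $-\nabla f_0(\bar x)$, when the displayed KKT-type system is solvable — lies in both cones, which is exactly \eqref{lpc:geometric}. The clause ``meaning that there exist $y_j$'' is nothing but the explicit coordinate description of $\Upsilon_{\Theta\cap\Gamma}(\bar x)^*$, so producing that description is part of the argument rather than an extra step; note $\bar x\in\partial\Theta$ forces $\bar x\neq 0$, hence $\bar\Ncal\neq\emptyset$, which will be used below.

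First I would compute $\Upsilon_{\Theta\cap\Gamma}(\bar x)^*$ by polarity calculus. Replacing each equality $\langle\nabla f_j(\bar x),d\rangle=0$, $j\in\Ecal$, by the two inequalities $\langle\pm\nabla f_j(\bar x),d\rangle\le 0$, one reads off $\Upsilon_{\Theta\cap\Gamma}(\bar x)=K^*$ with $K=\text{pos}\big(\partial\phi(\bar x)\cup\{\nabla f_j(\bar x):j\in\bar\Acal\}\cup\{\pm\nabla f_j(\bar x):j\in\Ecal\}\big)=\text{pos}\,\partial\phi(\bar x)+\Lambda_\Gamma(\bar x)$, with $\Lambda_\Gamma$ as in \eqref{normal mfcq}. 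By the bipolar theorem $\Upsilon_{\Theta\cap\Gamma}(\bar x)^*=\cl K=\cl\big(\cl(\text{pos}\,\partial\phi(\bar x))+\Lambda_\Gamma(\bar x)\big)$, and $\cl(\text{pos}\,\partial\phi(\bar x))=\text{pos}\,\partial\phi(\bar x)\cup\partial^\infty\phi(\bar x)=N_\Theta(\bar x)$ by \Cref{thm.regular2}. Finally $N_\Theta(\bar x)+\Lambda_\Gamma(\bar x)$ is already closed: from \eqref{normal cones1} and \eqref{partial phi} its $\bar\Zcal$-block is all of $\mathbb{R}^{|\bar\Zcal|}$, hence so is that of the sum, while its $\bar\Ncal$-block is the positive hull of the finitely many vectors $\big(p\,\sign(\bar x_i)|\bar x_i|^{p-1}\big)_{i\in\bar\Ncal}$, $\nabla_{\bar\Ncal}f_j(\bar x)$ for $j\in\bar\Acal$, and $\pm\nabla_{\bar\Ncal}f_j(\bar x)$ for $j\in\Ecal$, a polyhedral (hence closed) cone. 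This delivers $\Upsilon_{\Theta\cap\Gamma}(\bar x)^*=N_\Theta(\bar x)+\Lambda_\Gamma(\bar x)$ together with the displayed coordinatewise formula.

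Next I would prove $N_\Theta(\bar x)+\Lambda_\Gamma(\bar x)\subset\widehat N_{\Theta\cap\Gamma}(\bar x)$. The clean route uses two elementary facts and one regularity input: (i) the sum rule $\widehat N_A(\bar x)+\widehat N_B(\bar x)\subset\widehat N_{A\cap B}(\bar x)$, immediate from the $\limsup$ definition of regular normals; (ii) $\widehat N_\Theta(\bar x)=N_\Theta(\bar x)$, the Clarke regularity of $\Theta$ from \Cref{thm.regular2}; and (iii) $\Lambda_\Gamma(\bar x)\subset\widehat N_\Gamma(\bar x)$, since $\widehat N_\Gamma(\bar x)=T_\Gamma(\bar x)^*$ while $T_\Gamma(\bar x)$ is contained in the polyhedral linearized cone of the constraints $\{f_j\}$, whose polar is exactly $\Lambda_\Gamma(\bar x)$ by polyhedral polarity. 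Combining these gives $N_\Theta(\bar x)+\Lambda_\Gamma(\bar x)=\widehat N_\Theta(\bar x)+\widehat N_\Gamma(\bar x)\subset\widehat N_{\Theta\cap\Gamma}(\bar x)$; equivalently, one invokes \cite[Theorem 6.14]{rockafellar2009variational} with $\Theta$ playing the role of the geometric constraint set and $F=(f_j)_{j\in\Ical\cup\Ecal}$ the smooth map, whose regular-normal conclusion needs no constraint qualification. Chaining the two displays then finishes the proof: $-\nabla f_0(\bar x)\in\Upsilon_{\Theta\cap\Gamma}(\bar x)^*=N_\Theta(\bar x)+\Lambda_\Gamma(\bar x)\subset\widehat N_{\Theta\cap\Gamma}(\bar x)\subset N_{\Theta\cap\Gamma}(\bar x)$.

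The polar computation and the box/linearized-cone identifications are routine; the one place needing care is the appeal to \cite[Theorem 6.14]{rockafellar2009variational}: because $\phi$ is not differentiable it cannot be folded into the smooth constraint map, so $\Theta$ must enter as the geometric set, and one must use the variant whose regular-normal inclusion is unconditional, so that no Mangasarian--Fromovitz-type hypothesis on the $f_j$ is silently imported at this stage. I expect this bookkeeping — aligning the concrete data with the hypotheses of the cited theorem, together with the closedness check for $N_\Theta(\bar x)+\Lambda_\Gamma(\bar x)$ that lets one drop the outer closure — to be the only non-routine point.
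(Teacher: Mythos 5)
Your proposal is correct and follows essentially the same route as the paper, which simply asserts the identity $N_\Theta(\bar x)+\Lambda_\Gamma(\bar x)=\Upsilon_{\Theta\cap\Gamma}(\bar x)^*$ as ``obvious'' and then cites the unconditional regular-normal inclusion of \cite[Theorem 6.14]{rockafellar2009variational} to get $\Upsilon_{\Theta\cap\Gamma}(\bar x)^*\subset\widehat N_{\Theta\cap\Gamma}(\bar x)$. You merely fill in the details the paper omits (the bipolar/closedness computation identifying $\Upsilon^*$ with $N_\Theta+\Lambda_\Gamma$, and an elementary sum-rule alternative to the Theorem~6.14 citation), and your cautionary remark about which version of that theorem applies without a constraint qualification is exactly the right reading of the paper's appeal to it.
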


 	The EMFCQ for \eqref{eq.problem2} holds at   $\bar x \in \partial\Theta\cap\Gamma$   if the subvectors 
	$ \text{sign}(x_{\bar\Ncal})|x_{\bar\Ncal}|^{p-1}$,  $\nabla_{\bar\Ncal} f_j(\bar x)  $, $j \in\Ecal\cup \bar\Acal$ are linearly independent and there exists $ {d}\in\mathbb{R}^{|\bar\Ncal|}$ such that 
	\begin{equation}\label{mfcq.condition2} 
		\begin{aligned}
		\langle p\text{sign}(x_{\bar\Ncal})|x_{\bar\Ncal}|^{p-1}, d\rangle < 0, 	\langle \nabla_{\bar \Ncal} f_j(\bar x), {d}\rangle =0,\ j\in\mathcal{E}\ \text{ and } \  \langle \nabla_{\bar \Ncal} f_j(\bar x), {d}\rangle <0,\ j\in\bar\Acal.
		\end{aligned}		
	\end{equation}
Equivalently, the dual form  of  EMFCQ   
  for \eqref{eq.problem2}  holds at $\bar x\in\Theta\cap\Gamma$ if  $y_j = 0, j\in \{0\}\cup\bar \Acal$ is the {unique} solution of 
\[  \{  y_0p\text{sign}(\bar x_i) |\bar x_i|^{p-1}  +  \sum_{ j \in\bar\Acal\cup\Ecal } y_j \nabla_i f_j(\bar x) = 0, i\in\bar\Acal; \   y_j \ge 0, j\in \{0\}\cup\bar \Acal \}.\]


We now state the necessary optimality conditions for \eqref{eq.problem2}. 

\begin{theorem}\label{suff for nec} Suppose $\bar x \in \Gamma \cap \Theta $ with $\bar x \in \partial \Theta$ is local optimal for \eqref{eq.problem2}. 
\begin{enumerate}
\item[(a)]  If  the  EMFCQ   
  holds at  $\bar x$,    then there exist $y_j,\ j\in\Ecal$ and $y_j \ge 0,\ j\in \{0\}\cup\bar \Acal$, such that 
 \begin{equation}\label{nec nec}  \nabla_i f_0(\bar x) +  y_0p\sign(\bar x_i) |\bar x_i|^{p-1}  +  \sum_{ j \in\bar\Acal\cup\Ecal } y_j \nabla_i f_j(\bar x) = 0, i\in\bar\Ncal.    
 \end{equation} 
\item[(b)]  Suppose $\Gamma$ is closed and convex.  If $v=0$ is the only vector such that 
$v\in N_{\Theta}(\bar x)$ and $-v\in N_\Gamma(\bar x)$, then $N_{\Theta\cap\Gamma} (\bar x)= N_\Theta(\bar x) + N_\Gamma(\bar x)$. Therefore, if $\bar x$ is local optimal for \eqref{eq.problem2}, then 
\[ - \nabla_i f(\bar x) + y_0 p\sign(\bar x_i) |\bar x_i|^{p-1} + v_i = 0, i\in \bar\Ncal;  \  y_0\ge 0; \  v\in N_\Gamma(\bar x). \]

\item[(c)]  Suppose $\Gamma = \mathbb{R}^n$.  If $\bar x$ is local optimal for \eqref{eq.problem2}, then there exists $y_0\geq 0$ such that
\[ - \nabla_i f(\bar x) + y_0 p\sign(\bar x_i) |\bar x_i|^{p-1}  = 0, i\in \bar\Ncal. \]
\end{enumerate} 
\end{theorem}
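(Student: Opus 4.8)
The plan is to reduce all three parts to one ingredient---the sum rule for normal cones $N_{\Theta\cap\Gamma}(\bar x)=N_\Theta(\bar x)+N_\Gamma(\bar x)$ under a suitable constraint qualification---and then combine it with \Cref{basic.thm}(b), which already guarantees $-\nabla f_0(\bar x)\in\widehat N_{\Theta\cap\Gamma}(\bar x)\subset N_{\Theta\cap\Gamma}(\bar x)$ because $\bar x$ is a local minimizer. The sum rule will come from the intersection rule for normal cones (e.g.\ \cite[Theorem~6.42]{rockafellar2009variational}): if $\Theta$ and $\Gamma$ are locally closed at $\bar x$ and the only $v$ with $v\in N_\Theta(\bar x)$ and $-v\in N_\Gamma(\bar x)$ is $v=0$, then $N_{\Theta\cap\Gamma}(\bar x)\subset N_\Theta(\bar x)+N_\Gamma(\bar x)$, with equality (and Clarke regularity of $\Theta\cap\Gamma$ at $\bar x$) when in addition both sets are Clarke regular at $\bar x$. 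Since $\Theta$ is closed and, by \Cref{thm.regular2}, Clarke regular at $\bar x$, one side of the hypothesis is free throughout.

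For part (a), I would first note that the EMFCQ for \eqref{eq.problem2} at $\bar x$ contains the EMFCQ for $\Gamma$ at $\bar x$ as a sub-statement (a subfamily of linearly independent vectors is linearly independent, and the same direction $d$ satisfies the $\Gamma$-part of \eqref{mfcq.condition2}), so as recorded just before \Cref{optimality.p1}, $\Gamma$ is Clarke regular at $\bar x$ with $N_\Gamma(\bar x)=\Lambda_\Gamma(\bar x)$. The crux is then verifying the constraint qualification $N_\Theta(\bar x)\cap(-N_\Gamma(\bar x))=\{0\}$: if $v\in N_\Theta(\bar x)$ and $-v\in\Lambda_\Gamma(\bar x)$, then \eqref{normal cones1} and \eqref{normal mfcq} supply multipliers $y_0\ge0$ and $y_j$ ($j\in\Ecal\cup\bar\Acal$, with $y_j\ge0$ for $j\in\bar\Acal$) such that $y_0 p\sign(\bar x_i)|\bar x_i|^{p-1}+\sum_{j\in\Ecal\cup\bar\Acal}y_j\nabla_i f_j(\bar x)=0$ for every $i\in\bar\Ncal$; the dual form of the EMFCQ for \eqref{eq.problem2} forces all these multipliers to vanish, whence $v_{\bar\Ncal}=0$, and since $-v=\sum_j y_j\nabla f_j(\bar x)=0$ on $\bar\Zcal$ as well, $v=0$. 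With the CQ established, the intersection rule gives $N_{\Theta\cap\Gamma}(\bar x)\subset N_\Theta(\bar x)+\Lambda_\Gamma(\bar x)=\Upsilon_{\Theta\cap\Gamma}(\bar x)^*$; feeding $-\nabla f_0(\bar x)\in N_{\Theta\cap\Gamma}(\bar x)$ from \Cref{basic.thm}(b) into the explicit description of $\Upsilon_{\Theta\cap\Gamma}(\bar x)^*$ obtained just above the theorem yields \eqref{nec nec}.

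For part (b), $\Gamma$ closed and convex is automatically Clarke regular at $\bar x$ (with $N_\Gamma(\bar x)$ the ordinary convex normal cone), the required CQ $N_\Theta(\bar x)\cap(-N_\Gamma(\bar x))=\{0\}$ is assumed outright, and $\Theta$ is Clarke regular by \Cref{thm.regular2}, so \cite[Theorem~6.42]{rockafellar2009variational} gives at once that $\Theta\cap\Gamma$ is Clarke regular at $\bar x$ and $N_{\Theta\cap\Gamma}(\bar x)=N_\Theta(\bar x)+N_\Gamma(\bar x)$. Since $-\nabla f_0(\bar x)\in N_{\Theta\cap\Gamma}(\bar x)$ by \Cref{basic.thm}(b), splitting $-\nabla f_0(\bar x)$ into an element of $N_\Theta(\bar x)$ (whose $\bar\Ncal$-block is $(y_0 p\sign(\bar x_i)|\bar x_i|^{p-1})_{i\in\bar\Ncal}$ for some $y_0\ge0$, by \eqref{normal cones1}) and an element $v\in N_\Gamma(\bar x)$ produces the asserted identity on $\bar\Ncal$. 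Part (c) is the specialization $\Gamma=\mathbb{R}^n$: then $N_\Gamma(\bar x)=\{0\}$, the CQ of (b) holds trivially, $N_{\Theta\cap\Gamma}(\bar x)=N_\Theta(\bar x)$, and \Cref{basic.thm}(b) together with \eqref{normal cones1} gives the conclusion directly.

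I expect the only real obstacle to be the verification in part (a) that the dual form of the EMFCQ forces $N_\Theta(\bar x)\cap(-N_\Gamma(\bar x))=\{0\}$: one must be careful that the EMFCQ constrains only the $\bar\Ncal$-blocks of the constraint gradients, while \eqref{normal cones1} leaves the $\bar\Zcal$-block of $v\in N_\Theta(\bar x)$ free, so the vanishing of $v$ on $\bar\Zcal$ has to be deduced separately from the fact that all the multipliers $y_j$ are zero. The remaining steps are routine bookkeeping with the normal-cone formulas of \Cref{thm.regular1,thm.regular2} and the identity $N_\Theta(\bar x)+\Lambda_\Gamma(\bar x)=\Upsilon_{\Theta\cap\Gamma}(\bar x)^*$ already established in the text.
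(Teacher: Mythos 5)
Your proof is correct, and parts (b) and (c) coincide with the paper's argument (both rest on the intersection rule \cite[Theorem~6.42]{rockafellar2009variational} applied to $\Theta$ and $\Gamma$, followed by \Cref{basic.thm}(b)). Part (a), however, takes a genuinely different route. The paper disposes of (a) in one line by invoking \cite[Theorem~6.14]{rockafellar2009variational} directly on the \emph{combined} constraint system defining $\Theta\cap\Gamma$, concluding $\widehat N_{\Theta\cap\Gamma}(\bar x)=\Upsilon_{\Theta\cap\Gamma}(\bar x)^*$ in a single stroke; this is terse but somewhat delicate, since that theorem is stated for systems of smooth constraint functions and the $\ell_p$-ball constraint is non-smooth off the support of $\bar x$, so the paper is implicitly working in the reduced subspace $\mathbb{R}^{|\bar\Ncal|}$. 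You instead treat $\Theta$ and $\Gamma$ as two separate sets whose normal cones and Clarke regularity are already known (\Cref{thm.regular2} for $\Theta$, the EMFCQ for $\Gamma$ alone giving $N_\Gamma=\Lambda_\Gamma$), verify the transversality condition $N_\Theta(\bar x)\cap(-N_\Gamma(\bar x))=\{0\}$ from the dual form of the EMFCQ for \eqref{eq.problem2}, and then apply the same Theorem~6.42 used in (b). Your version costs an extra verification step (which you carry out correctly, including the point that the $\bar\Zcal$-block of $v$ must be killed via the vanishing of all multipliers rather than directly), but it buys a uniform treatment of all three parts through a single sum rule and sidesteps the smoothness caveat in the paper's appeal to Theorem~6.14. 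Both arguments reach \eqref{nec nec} through the same explicit description of $\Upsilon_{\Theta\cap\Gamma}(\bar x)^*$ recorded before the theorem.
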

\begin{proof}
(a)  From  \cite[Theorem 6.14]{rockafellar2009variational}, 
 if the  EMFCQ holds for \eqref{eq.problem2} at  $\bar x \in \Gamma \cap \Theta $ with $\bar x \in \partial \Theta$,  
then $\Theta\cap\Gamma$ is regular at $\bar x$ and  
\begin{equation} 
\widehat N_{\Theta\cap\Gamma}(\bar x) =  \Upsilon_{\Theta\cap\Gamma}(\bar x)^*.
\end{equation} 
By \Cref{basic.thm}, (a) is true. 
 
(b) By \cite[Theorem 6.42]{rockafellar2009variational}, $N_{\Theta\cap\Gamma}(\bar x) = N_\Theta(\bar x) + N_\Gamma(\bar x)$. Therefore, if $\bar x$ is 
local optimal, then $\bar x\in \widehat N_{\Theta\cap\Gamma} (\bar x) \subset  N_{\Theta\cap\Gamma} (\bar x)  = N_\Theta(\bar x) + N_\Gamma(\bar x)$.

(c) Trivial by (b). 
\end{proof}

 We call the conditions \eqref{nec nec} the   Karush-Kuhn-Tucker (KKT) conditions for \eqref{eq.problem2}.  Using the notation of $\Lambda_\Gamma$, 
 it can also be equivalently written as 
 \begin{equation}\label{kkt.2.lambda}
 - \nabla f_0(\bar x) \in N_{\Theta}(\bar x) +  \Lambda_{\Gamma}(\bar x).
 \end{equation}

\section{First-order sequential optimality condition}
In this section, we study the sequential optimality conditions under the approximate  Karush-Kuhn-Tucker (AKKT) conditions, 
which 
are defined as follows.

\begin{definition}
\begin{enumerate}
\item[(i)] 	For \eqref{eq.problem1}, we say that $\bar x\in \Gamma$ satisfies  the   AKKT  if there exist   $\{x^\nu\}\subset \mathbb{R}^{n}$, 
	$\{ y_j^\nu\}\subset\mathbb{R}$, $j\in \Ecal\cup\bar\Acal$    such that $\lim\limits_{\nu\to\infty}x ^\nu=\bar x$,  
	 $y_j^\nu \ge 0, j \in  \bar\Acal$  and  
	\begin{equation*}\label{KKT 1}
	\lim\limits_{\nu\to\infty}\nabla_i f(x ^\nu) +  \lambda   p  \sign(x^\nu_i) |x^\nu_i|^{p-1}+\sum\limits_{j\in\Ecal\cup\bar\Acal}y_j^\nu \nabla_i f_j(x ^\nu)   =   0,\  \forall i\in  \bar\Ncal.    
          \end{equation*}
\item[(ii)] For \eqref{eq.problem2}, we say that $\bar x\in \Theta\cap\Gamma$ with $\bar x\in\partial \Theta$ satisfies  the    AKKT  if there exist   $\{x^\nu\}\subset \mathbb{R}^{n}$, 
	$\{ y_j^\nu\}\subset\mathbb{R}$, $j\in \{0\}\cup \Ecal\cup\bar\Acal$    such that $\lim\limits_{\nu\to\infty}x ^\nu=\bar x$,  
	 $y_j^\nu \ge 0, j \in \{0\}\cup \bar\Acal$  and  
	\begin{equation*}\label{KKT 1}
	\lim\limits_{\nu\to\infty}\nabla_i f(x ^\nu) +  y_0^\nu   p  \sign(x^\nu_i) |x^\nu_i|^{p-1}+\sum\limits_{j\in\Ecal\cup\bar\Acal}y_j^\nu \nabla_i f_j(x ^\nu)   =   0,\  \forall i\in  \bar\Ncal.    
          \end{equation*}
         \end{enumerate}
\end{definition}

Next we provide properties  of \eqref{eq.problem1} and \eqref{eq.problem2} under which the AKKT implies the  KKT. This property is named the extended cone-continuity property (ECCP), which is defined as follows. 

\begin{definition}
\begin{enumerate}
\item[(a)] We say that $\bar x\in \Gamma$ satisfies the  ECCP  for \eqref{eq.problem1} 
 if the set-valued mapping $\lambda\partial \phi(x) + \Lambda_\Gamma(x)$ is outer semicontinuous at $\bar x$, that is, 
\[ \limsup_{x^\nu \to\bar x} [\lambda\partial \phi(x^\nu) + \Lambda_\Gamma(x^\nu)] \subset \lambda\partial \phi(\bar x) + \Lambda_\Gamma(\bar x). \] 
If $\Gamma$ is a closed and convex set,   $\Lambda_\Gamma$ can be replaced by $N_\Gamma$. 
\item[(b)] Similarly,  we say $\bar x\in \Theta \cap \Gamma$ satisfies the  ECCP  for \eqref{eq.problem2} 
 if the set-valued mapping $\Lambda_{\Theta\cap\Gamma}(x)$ is outer semicontinuous at $\bar x$, that is, 
\[ \limsup_{x^\nu \to\bar x} [ N_{\Theta}(  x^\nu) +  \Lambda_{\Gamma}(  x^\nu)] \subset N_{\Theta}(\bar x) +  \Lambda_{\Gamma}(\bar x). \] 
If $\Gamma$ is a closed and convex set,   $\Lambda_\Gamma$ can be replaced by $N_\Gamma$. 
\end{enumerate}
\end{definition}

In the following theorem we show that if ECCP holds,  the  AKKT implies  the KKT. 
\begin{theorem}\label{Px AKKT } 
\begin{enumerate}
\item[(a)] 
For \eqref{eq.problem1}, suppose the AKKT holds at $\bar x \in \Gamma$. If the ECCP holds at $\bar x$, then the KKT  condition  \eqref{kkt.1.lambda}  are satisfied  at $\bar x$. 
\item[(b)] 
For \eqref{eq.problem2}, suppose the AKKT holds at $\bar x \in \partial\Theta\cap\Gamma$. If the ECCP holds at $\bar x$, then the KKT condition \eqref{kkt.2.lambda}   are satisfied  at $\bar x$.
\end{enumerate}
\end{theorem}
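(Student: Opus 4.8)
The plan is to argue both parts (a) and (b) by the same template, since the AKKT and ECCP definitions were set up to make this mechanical. First I would unwind the AKKT hypothesis into a sequence statement: there exist $x^\nu \to \bar x$ and multipliers $y_j^\nu$ (with the appropriate sign constraints, and $j$ ranging over $\mathcal{E}\cup\bar\Acal$ in case (a), over $\{0\}\cup\mathcal{E}\cup\bar\Acal$ in case (b)) such that the vector
\[
w^\nu := \nabla f_0(x^\nu) + \lambda\,p\,\sign(x^\nu)\odot|x^\nu|^{p-1}\big|_{\text{supp}} + \sum_{j\in\mathcal{E}\cup\bar\Acal} y_j^\nu \nabla f_j(x^\nu)
\]
has components indexed by $\bar\Ncal$ tending to $0$. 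The first genuine point to nail down is that, since $\bar x\in\Gamma$ (resp. $\bar x\in\partial\Theta\cap\Gamma$) has $\bar x_i\neq 0$ exactly for $i\in\bar\Ncal$, for $\nu$ large we have $\bar\Ncal\subset\Ncal^\nu$, so the quantity $p\,\sign(x^\nu_i)|x^\nu_i|^{p-1}$ for $i\in\bar\Ncal$ is exactly the $i$-th component of a genuine element of $\partial\phi(x^\nu)$ by Theorem \ref{thm.regular1}, and similarly $\sum_j y_j^\nu\nabla f_j(x^\nu)\in\Lambda_\Gamma(x^\nu)$ (resp. the corresponding element lies in $N_\Theta(x^\nu)+\Lambda_\Gamma(x^\nu)$). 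Hence, restricting to the $\bar\Ncal$-block, $(\nabla f_0(x^\nu) - w^\nu)_{\bar\Ncal}$ lies in $[\lambda\partial\phi(x^\nu)+\Lambda_\Gamma(x^\nu)]_{\bar\Ncal}$.

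Next I would pass to the limit. Because $f_0$ and all $f_j$ are $C^1$, $\nabla f_0(x^\nu)\to\nabla f_0(\bar x)$ and $\nabla f_j(x^\nu)\to\nabla f_j(\bar x)$; combined with $w^\nu_{\bar\Ncal}\to 0$, the sequence $\nabla f_0(x^\nu) - w^\nu$ converges to $\nabla f_0(\bar x)$ (at least in its $\bar\Ncal$-components, which is all the KKT conditions \eqref{kkt.1.lambda}/\eqref{kkt.2.lambda} constrain). So $-\nabla f_0(\bar x)$, read off its $\bar\Ncal$-block, is the limit of points in $\lambda\partial\phi(x^\nu)+\Lambda_\Gamma(x^\nu)$ (resp. $N_\Theta(x^\nu)+\Lambda_\Gamma(x^\nu)$), i.e. it belongs to $\limsup_{x^\nu\to\bar x}[\lambda\partial\phi(x^\nu)+\Lambda_\Gamma(x^\nu)]$. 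Invoking the ECCP hypothesis, this outer limit is contained in $\lambda\partial\phi(\bar x)+\Lambda_\Gamma(\bar x)$ (resp. $N_\Theta(\bar x)+\Lambda_\Gamma(\bar x)$), which is precisely the KKT condition \eqref{kkt.1.lambda} (resp. \eqref{kkt.2.lambda}). For part (b) one additionally uses Theorem \ref{thm.regular2} to identify $\mathrm{pos}\,\partial\phi + \partial^\infty\phi$ with $N_\Theta$ so the multiplier form of $N_\Theta(\bar x)$ matches the $y_0$-term in \eqref{kkt.2.lambda}.

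The main obstacle, and the step deserving the most care, is the bookkeeping of \emph{which components} of the stationarity residual are controlled. AKKT only asserts convergence of the $\bar\Ncal$-components of $w^\nu$; the $\bar\Zcal$-components of the reweighting term $p\,\sign(x^\nu_i)|x^\nu_i|^{p-1}$ can blow up (this is exactly the non-Lipschitz pathology, and the reason $\partial^\infty\phi$ entered Theorem \ref{basic.thm}). So I must be careful that the set-valued map in the ECCP definition and the limiting argument are genuinely read only on the $\bar\Ncal$-block, or equivalently that $\lambda\partial\phi(x^\nu)+\Lambda_\Gamma(x^\nu)$ is being compared componentwise on $\bar\Ncal$ — which is consistent with how $\partial\phi(\bar x)$ in \eqref{partial phi} only pins down coordinates in $\bar\Ncal$ and leaves the others free. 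A secondary technical point is ensuring the chosen elements of $\partial\phi(x^\nu)$ and $\Lambda_\Gamma(x^\nu)$ are legitimate: for $\Lambda_\Gamma$ this is automatic since $\bar\Acal$ is fixed and the $y_j^\nu$ carry the right signs, and one need not worry about $\bar\Acal$ being the active set at $x^\nu$ because $\Lambda_\Gamma$ is defined with the \emph{fixed} index set $\bar\Acal$. Once this indexing is set up cleanly, the proof is a two-line limit plus citation of the ECCP.
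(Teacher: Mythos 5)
Your argument is correct and follows essentially the same route as the paper's proof: interpret the AKKT residual as an element of $\lambda\partial\phi(x^\nu)+\Lambda_\Gamma(x^\nu)$ (resp.\ $N_\Theta(x^\nu)+\Lambda_\Gamma(x^\nu)$), pass to the limit using continuity of the gradients, and invoke the ECCP outer-semicontinuity to place $-\nabla f_0(\bar x)$ in the limiting set, which is exactly \eqref{kkt.1.lambda} (resp.\ \eqref{kkt.2.lambda}). Your added care about the fact that AKKT only controls the $\bar\Ncal$-components (while the $\bar\Zcal$-components of $\partial\phi(\bar x)$ are free, so the KKT inclusion is genuinely a condition on the $\bar\Ncal$-block only) is a bookkeeping point the paper's proof passes over silently, but it does not change the route.
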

\begin{proof} (a)    
First of all,    $\bar\Ncal \subset \Ncal(x ^\nu)$   for $ x^\nu$ sufficiently close to $\bar x$.  Since the   AKKT  condition holds at $\bar x$, there 
exist   $\{x ^\nu\}\to \bar x$ and $\{w^\nu\} \subset \mathbb{R}^n$   such that 
$ \nabla  f(x ^\nu) +  {w}^\nu  \to 0$,  
where  
$ {w}^\nu \in     \lambda\partial\phi(  x^\nu) +  \Lambda_\Gamma( x^\nu)$. 
	Taking limits and using the continuity of the gradient of $f$  near $\bar x$,  we obtain
	\begin{equation*}
		-\nabla f(\bar x) = \lim_{\nu\to\infty}  {w}^\nu \in \limsup_{\nu\to\infty}  [\lambda\partial\phi(  x^\nu) +  \Lambda_\Gamma( x^\nu) ]\subset\limsup_{x \to \bar x} [\lambda\partial\phi(  x ) +  \Lambda_\Gamma( x )]\subset \lambda\partial\phi(\bar x) + \Lambda_\Gamma(\bar x),
	\end{equation*}
where the last inclusion follows from the ECCP. Therefore,  $-\nabla  f(\bar x) \in \lambda \partial\phi(\bar x) + \Lambda_\Gamma(\bar x)$. 

(b) Similarly,  since the   AKKT  condition holds at $\bar x$, there 
exist   $\{x ^\nu\}\to \bar x$ and $\{w^\nu\} \subset \mathbb{R}^n$   such that 
$\nabla  f(x ^\nu)  +   w^\nu  \to 0$,
where $ w^\nu  \in N_{\Theta}(  x^\nu) +  \Lambda_{\Gamma} (  x^\nu)$. 
	Taking limits and using the continuity of the gradient of $f$   near $\bar x$,  we obtain
	\begin{equation*}
		-\nabla  f(\bar x)  = \lim_{\nu\to\infty}  {w}^\nu \in \limsup_{\nu\to\infty} N_{\Theta}(  x^\nu) +  \Lambda_{\Gamma}(  x^\nu)  \subset   
		\lim_{x\to\bar x}  N_{\Theta}(  x) +  \Lambda_{\Gamma}(  x)   \subset  N_{\Theta}(\bar x) +  \Lambda_{\Gamma}(\bar x),
	\end{equation*}
where the last inclusion follows from the ECCP. Therefore,  $-\nabla f(\bar x) \in N_{\Theta}(\bar x) +  \Lambda_{\Gamma}(\bar x)$. 
\end{proof}

We discuss the cases when the ECCP holds true.  For \eqref{eq.problem1}, we have the following results. 
 
 \begin{proposition}\label{prop.ccp1} 
 For \eqref{eq.problem1},  the ECCP holds true for any  of the following cases.  
 \begin{enumerate}  
  \item[(a)]     $\Lambda_\Gamma$ is  outer semicontinuous at $\bar x$ and  $\partial^\infty \phi(\bar x)$ contains no vector $v\ne 0$ such that 
$-v\in  \Lambda_\Gamma(\bar x)$.     
\item[(b)] The EMFCQ is satisfied at $\bar x$. 
\item[(c)]    $\Gamma$ is a closed and convex set  and  $\partial^\infty \phi(\bar x)$ contains no vector $v\ne 0$ such that 
$-v\in  N_ \Gamma(\bar x)$.   
 \end{enumerate}
   \end{proposition}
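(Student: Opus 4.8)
The plan is to establish part (a) directly and then to obtain (b) and (c) from it. Fix $\bar x$ and let $w$ be an arbitrary element of $\limsup_{x\to\bar x}[\lambda\partial\phi(x)+\Lambda_\Gamma(x)]$; then there are $x^\nu\to\bar x$ together with a splitting $w^\nu=u^\nu+z^\nu$, where $u^\nu\in\lambda\partial\phi(x^\nu)$, $z^\nu\in\Lambda_\Gamma(x^\nu)$, and $w^\nu\to w$, and the goal is to show $w\in\lambda\partial\phi(\bar x)+\Lambda_\Gamma(\bar x)$. I would use two outer-semicontinuity (osc) facts throughout: first, $\partial\phi$, hence $\lambda\partial\phi$, is osc at $\bar x$ — immediate from the explicit description in \Cref{thm.regular1}, since for $x$ near $\bar x$ the coordinates of any element of $\partial\phi(x)$ indexed by $\bar\Ncal$ vary continuously while the remaining ones are unconstrained in $\partial\phi(\bar x)$ (equivalently, this is the general osc of subgradient mappings of continuous functions); second, $\Lambda_\Gamma$ is osc at $\bar x$, which is part of hypothesis (a).

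For (a) I would split according to the behaviour of $\{z^\nu\}$. If $\{z^\nu\}$ has a bounded subsequence, pass to it and extract a further subsequence with $z^\nu\to\bar z$; osc of $\Lambda_\Gamma$ gives $\bar z\in\Lambda_\Gamma(\bar x)$, then $u^\nu=w^\nu-z^\nu\to w-\bar z$, and osc of $\lambda\partial\phi$ gives $w-\bar z\in\lambda\partial\phi(\bar x)$, so $w=(w-\bar z)+\bar z\in\lambda\partial\phi(\bar x)+\Lambda_\Gamma(\bar x)$. If $\{z^\nu\}$ has no bounded subsequence, then $\|z^\nu\|\to\infty$; put $t^\nu=z^\nu/\|z^\nu\|$, which still lies in $\Lambda_\Gamma(x^\nu)$ because that set is a cone, and pass to a subsequence with $t^\nu\to\bar t$, $\|\bar t\|=1$, along which $\|z^\nu\|$ is monotone increasing. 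Then $u^\nu/\|z^\nu\|=w^\nu/\|z^\nu\|-t^\nu\to-\bar t$; since $u^\nu/\lambda\in\widehat\partial\phi(x^\nu)$, $x^\nu\to\bar x$ (equivalently $x^\nu\xrightarrow[\phi]{}\bar x$, as $\phi$ is continuous), and $1/\|z^\nu\|\searrow0$, the definition of horizon subgradient yields $-\bar t/\lambda\in\partial^\infty\phi(\bar x)$, hence $-\bar t\in\partial^\infty\phi(\bar x)$ because $\partial^\infty\phi(\bar x)$ is a cone and $\lambda>0$. On the other hand, osc of $\Lambda_\Gamma$ gives $\bar t\in\Lambda_\Gamma(\bar x)$. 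Setting $v:=-\bar t$, we obtain a nonzero $v\in\partial^\infty\phi(\bar x)$ with $-v\in\Lambda_\Gamma(\bar x)$, contradicting the hypothesis of (a); so the second case cannot occur, we are always in the first, and (a) follows.

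For (b): if the EMFCQ holds at $\bar x$, then so does the ordinary MFCQ, and, the MFCQ being stable under perturbation of the base point, it holds throughout a neighbourhood of $\bar x$; by \cite[Theorem 6.14]{rockafellar2009variational} this forces $\Lambda_\Gamma(x)=N_\Gamma(x)$ for $x$ near $\bar x$, so $\Lambda_\Gamma$ inherits the outer semicontinuity of the normal-cone mapping at $\bar x$. Moreover, the argument in the proof of \Cref{optimality.p1}(a) shows precisely that, under the EMFCQ, $\partial^\infty\phi(\bar x)$ contains no $v\ne0$ with $-v\in N_\Gamma(\bar x)=\Lambda_\Gamma(\bar x)$. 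Hence the hypotheses of (a) are met and (b) follows. For (c): when $\Gamma$ is closed and convex the ECCP is to be checked with $N_\Gamma$ in place of $\Lambda_\Gamma$, and the normal-cone mapping of a closed set is always osc; the argument of (a), read verbatim with every $\Lambda_\Gamma$ replaced by $N_\Gamma$, then applies and uses exactly the horizon hypothesis stated in (c).

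The main obstacle, and the one place where the non-degeneracy hypothesis is indispensable, is the unbounded case in (a): one has to recognize that the normalized multipliers $z^\nu/\|z^\nu\|$ and the normalized subgradients $u^\nu/\|z^\nu\|$ converge to antipodal unit vectors and that, by the very definition of $\partial^\infty\phi$, the limit of the latter is a \emph{nonzero} horizon subgradient — which is exactly what the hypothesis forbids from being anti-normal to $\Gamma$. The bounded case, and the reductions of (b) and (c) to (a), are routine once the osc properties of $\partial\phi$ and of the relevant normal-cone/multiplier mappings are in place.
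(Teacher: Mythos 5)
Your proof is correct and follows essentially the same route as the paper's: decompose an arbitrary element of the outer limit as a subgradient of $\lambda\phi$ plus a multiplier term, split on boundedness, use outer semicontinuity in the bounded case, and in the unbounded case normalize to produce a nonzero horizon subgradient of $\phi$ whose negative lies in $\Lambda_\Gamma(\bar x)$, contradicting the hypothesis. The one refinement worth noting is that in part (a) you normalize the aggregate multiplier vector $z^\nu$ by its norm (as the paper does only in case (c)), which forces the limiting horizon subgradient to be a unit vector and hence nonzero, whereas the paper normalizes by $\max_j |y_j^\nu|$ and leaves implicit the check that the limit $\bar u$ is nonzero --- a point that could fail when the gradients $\nabla f_j(\bar x)$ are linearly dependent --- so your variant is, if anything, slightly tighter.
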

   
   \begin{proof} (a) 
    Let $\bar w$ be an element of $\limsup\limits_{x\to \bar x}  [\lambda\partial\phi(  x) + \Lambda_\Gamma(x)]$, 
   so there are sequences $\{x^\nu\}$, $\{w^\nu\}$, $\{u^\nu\}$ such that 
   $x^\nu \to \bar x$, $w^\nu \to \bar w$ and 
  \begin{equation}\label{eq.w}  w^\nu_i = u_i^\nu + \sum_{j\in\Ecal\cup\bar\Acal} y_j^\nu \nabla_i f_j(x^\nu)
  \end{equation} 
   with  $u^\nu \in \partial \lambda\phi(  x^\nu) $ and $y_j^\nu \in \mathbb{R}_+, j\in\bar\Acal$. 
   
    If $\{y_j^\nu\}, j\in\Ecal\cup\bar\Acal$ are bounded, then they all have limits $\bar y_j, j\in\Ecal\cup\bar\Acal$; moreover, $\{u^\nu\}$ is also bounded and 
     $\bar u := \lim_{\nu\to\infty} u^\nu \in \lambda\partial\phi(\bar x)$ (possibly taking limits on a convergent subsequence)
       due to the outer semicontinuity of $\partial\phi$.  
    By possibly extracting an convergent 
    subsequence,  we have 
  \[ u^\nu +  \sum_{j\in\Ecal\cup\bar\Acal} y_j^\nu \nabla_i f_j(x^\nu) \to \bar u +  \sum_{j\in\Ecal\cup\bar\Acal} \bar y_j \nabla_i f_j(\bar x) \in \lambda\partial\phi(\bar x)+\Lambda_\Gamma(\bar x),\]  
  meaning $\lambda \partial \phi  + \Lambda_\Gamma$ is outer semicontinuous at $\bar x$. 
 
      If $\{y_j^\nu\}, j\in\Ecal\cup\bar\Acal$ are unbounded,  letting $M^\nu = \max\{ |y_j^\nu|,  j\in\Ecal\cup\bar\Acal\} $.  
      Dividing \eqref{eq.w} by $M^\nu$, we arrive at 
        \[\frac{w^\nu}{M^\nu} =  \frac{u^\nu}{M^\nu} +  \sum_{j\in\Ecal\cup\bar\Acal} \frac{y_j^\nu}{M^\nu} \nabla_i f_j(x^\nu)\] 
 Since $\max\{ y_j^\nu/M^\nu, j\in\{0\}\cup\Ecal\cup\bar\Acal\}=1$ for all $\nu$, they have nonzero limit point $\tilde y_j, j\in\Ecal\cup\bar\Acal$. 
 Moreover,   
 $\bar u := \lim_{\nu\to\infty} \frac{u^\nu}{M^\nu} \in \lambda \partial^\infty\phi(\bar x)$,  we can extract a convergent 
   subsequence. Thus, taking limits above, we get  
   \[   \lambda \partial^\infty\phi(\bar x)  \ni \bar u = -    \sum_{j\in\Ecal\cup\bar\Acal} \tilde y_j  \nabla_i f_j(\bar x )  \in \Lambda_\Gamma(\bar x),\]      a contradiction.

(b) If the EMFCQ holds at $\bar x$,  then  $\partial^\infty \phi(\bar x)$ contains no vector $v\ne 0$ such that 
$-v\in  \Lambda_\Gamma(\bar x)$ and $\Lambda_\Gamma = N_\Gamma$ by \Cref{optimality.p1}(a). This reverts to (a). 
 
(c) Let $\bar w$ be an element of $\limsup\limits_{x\to \bar x}  [\lambda\partial\phi(  x) + N_\Gamma(x)]$, 
   so there are sequences $\{x^\nu\}$, $\{w^\nu\}$, $\{u^\nu\}$, $\{v^\nu\}$ such that 
   $x^\nu \to \bar x$, $w^\nu \to \bar w$ and 
    \begin{equation}\label{eq.w2} 
     w^\nu= u^\nu + v^\nu
     \end{equation} 
       with  $u^\nu \in \partial \lambda\phi(  x^\nu) $ and $v^\nu\in N_\Gamma(x^\nu)$. 
   
    If $\{v^\nu\}$ are bounded, then $\{u^\nu\}$ and $\{v^\nu\}$ all have limits $\bar u$ and $\bar v$.  Moreover,   $\bar u := \lim\limits_{\nu\to\infty} u^\nu \in \lambda\partial\phi(\bar x)$ 
    and $\bar v\in N_\Gamma(\bar x)$  
     (possibly taking limits on a convergent subsequence)
       due to the outer semicontinuity of $\partial\phi$ and $N_\Gamma$.  
    By possibly extracting an convergent 
    subsequence,  we have 
  \[ u^\nu + v^\nu \to \bar u +  \bar v \in \lambda\partial\phi(\bar x)+\Lambda_\Gamma(\bar x),\]  
  meaning $\lambda \partial \phi  + \Lambda_\Gamma$ is outer semicontinuous at $\bar x$. 
 
      If $\{v^\nu\}$ are unbounded,  letting $M^\nu =  \|v^\nu\|$.  
      Dividing \eqref{eq.w} by $M^\nu$, we arrive at 
        \[\frac{w^\nu}{M^\nu} =  \frac{u^\nu}{M^\nu} +    \frac{v^\nu}{M^\nu}.\] 
 Since  $\frac{v^\nu}{M^\nu}=1$ for all $\nu$, it has nonzero limit point $\tilde v\in N_\Gamma(\bar x)$ due to the outer semicontinuity of $N_\Gamma$. 
 Moreover,   
 $\bar u := \lim\limits_{\nu\to\infty} \frac{u^\nu}{M^\nu} \in \lambda \partial^\infty\phi(\bar x)$,  we can extract a convergent 
   subsequence. Thus, taking limits above, we get  
   \[   \lambda \partial^\infty\phi(\bar x)  \ni \bar u = -  \bar v \in N_\Gamma(\bar x),\]      a contradiction.  
   \end{proof}

As for \eqref{eq.problem2}, we have the following results. 
 
 \begin{proposition}\label{prop.ccp2}
   For \eqref{eq.problem2},  the ECCP holds true for any of the following cases. 
 \begin{enumerate}  
\item[(a)]      $\Lambda_\Gamma$ is  outer semicontinuous at $\bar x$ and the only solution for $v_1+v_2 = 0$ with $v_1\in N_\Theta(\bar x)$ and $v_2\in \Lambda_\Gamma(\bar x)$ 
is $v_1=v_2=0$.    
\item[(b)] The  EMFCQ is satisfied at $\bar x$.      
\item[(c)]    $\Gamma$ is a closed and convex set  and  and the only solution for $v_1+v_2 = 0$ with $v_1\in N_\Theta(\bar x)$ and $v_2\in N_\Gamma(\bar x)$ 
is $v_1=v_2=0$.      
\end{enumerate}
   \end{proposition}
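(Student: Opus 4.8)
The plan is to mirror the proof of Proposition~\ref{prop.ccp1} almost verbatim, transplanting its three-case structure to the setting of \eqref{eq.problem2}, where the role of $\lambda\partial\phi$ is replaced by the normal cone $N_\Theta$. The key structural fact we rely on is \Cref{thm.regular2}: for $\bar x\in\partial\Theta$ we have $N_\Theta(\bar x) = \text{pos}\,\partial\phi(\bar x) \cup \partial^\infty\phi(\bar x)$, so $N_\Theta$ is an outer semicontinuous (indeed a closed-graph) cone-valued mapping near $\bar x$, and its ``horizon part'' is exactly $\partial^\infty\phi(\bar x)$. This is what lets the boundedness/unboundedness dichotomy go through: a bounded sequence of multipliers yields a limit in $N_\Theta(\bar x)$ by outer semicontinuity, while an unbounded one, after normalizing by $M^\nu\to\infty$, produces in the limit an element of $\partial^\infty\phi(\bar x)$ (the scaled $\text{pos}$-part collapses into the recession cone), contradicting the nondegeneracy hypothesis.

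For part~(a): take $\bar w\in\limsup_{x\to\bar x}[N_\Theta(x)+\Lambda_\Gamma(x)]$, so there are $x^\nu\to\bar x$, $w^\nu\to\bar w$ with $w^\nu = u^\nu + v^\nu$, $u^\nu\in N_\Theta(x^\nu)$, $v^\nu\in\Lambda_\Gamma(x^\nu)$. Here one must be slightly careful since $\Lambda_\Gamma$ as written fixes the active set $\bar\Acal$ at $\bar x$; I would either write $v^\nu = \sum_{j\in\Ecal\cup\bar\Acal} y_j^\nu\nabla f_j(x^\nu)$ with $y_j^\nu\ge 0$ for $j\in\bar\Acal$ and treat the $y_j^\nu$ as the sequence to dichotomize, or simply invoke outer semicontinuity of $\Lambda_\Gamma$ as hypothesized and bound $\|v^\nu\|$. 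In the bounded case, pass to a subsequence so that $u^\nu\to\bar u$ and $v^\nu\to\bar v$; outer semicontinuity of $N_\Theta$ (from \Cref{thm.regular2}) gives $\bar u\in N_\Theta(\bar x)$, outer semicontinuity of $\Lambda_\Gamma$ gives $\bar v\in\Lambda_\Gamma(\bar x)$, hence $\bar w=\bar u+\bar v\in N_\Theta(\bar x)+\Lambda_\Gamma(\bar x)$. In the unbounded case set $M^\nu = \|u^\nu\|+\|v^\nu\|\to\infty$; dividing through and extracting a subsequence we get $u^\nu/M^\nu\to\tilde u$ and $v^\nu/M^\nu\to\tilde v$ with $w^\nu/M^\nu\to 0$, so $\tilde u+\tilde v=0$, $(\tilde u,\tilde v)\ne(0,0)$. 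Because $\text{pos}$ is invariant under positive scaling and the $\text{pos}$-part of $N_\Theta$ lives in a bounded set of directions scaled by a nonnegative factor, $\tilde u\in N_\Theta(\bar x)^\infty$; but $N_\Theta(\bar x)$ is already a cone with $N_\Theta(\bar x)^\infty = \partial^\infty\phi(\bar x)$ when $\bar x\in\partial\Theta$ (or more simply $\tilde u\in N_\Theta(\bar x)$ since that cone is closed), and likewise $\tilde v\in\Lambda_\Gamma(\bar x)$; either way $v_1:=\tilde u$, $v_2:=\tilde v$ violate the stated nondegeneracy, a contradiction. Hence only the bounded case occurs and ECCP holds.

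Parts~(b) and~(c) then reduce to~(a). For~(b): by \Cref{suff for nec}(a) (or the dual form of EMFCQ stated just before it), EMFCQ forces $y_j=0$, $j\in\{0\}\cup\bar\Acal$, to be the unique solution of the homogeneous system, which is precisely the statement that $v_1+v_2=0$ with $v_1\in N_\Theta(\bar x)$, $v_2\in\Lambda_\Gamma(\bar x)$ implies $v_1=v_2=0$, and moreover EMFCQ implies $\Lambda_\Gamma=N_\Gamma$ is outer semicontinuous at $\bar x$; so the hypotheses of~(a) are met. For~(c): when $\Gamma$ is closed and convex, $N_\Gamma$ is automatically outer semicontinuous (it is the normal-cone map of a convex set, which has closed graph), so again the hypotheses of~(a) hold with $\Lambda_\Gamma$ replaced by $N_\Gamma$, as permitted by the definition of ECCP.

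I expect the main obstacle to be purely bookkeeping rather than conceptual: making the identification ``unbounded multipliers, after normalization, land in $\partial^\infty\phi(\bar x)$'' fully rigorous. The cleanest route is to avoid talking about $N_\Theta(\bar x)^\infty$ altogether and instead track multiplier sequences explicitly—writing $u^\nu = \lambda^\nu v^\nu$ with $v^\nu_j = \sign(x^\nu_j)p|x^\nu_j|^{p-1}$ on $\Ncal(x^\nu)$ and $\lambda^\nu\ge 0$—so that normalization sends $\lambda^\nu/M^\nu$ either to a finite limit (landing $\tilde u$ in $\text{pos}\,\partial\phi(\bar x)\subset N_\Theta(\bar x)$) or to $0$ (landing $\tilde u$ in $\partial^\infty\phi(\bar x)\subset N_\Theta(\bar x)$); in both subcases $\tilde u\in N_\Theta(\bar x)$, which is all we need since the nondegeneracy hypothesis is phrased with $v_1\in N_\Theta(\bar x)$. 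With that observation the three proofs are essentially transcriptions of the corresponding parts of Proposition~\ref{prop.ccp1}.
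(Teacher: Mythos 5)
Your proposal is correct and follows essentially the same route as the paper, which itself only sketches this proof by pointing back to Proposition~\ref{prop.ccp1} and substituting $N_\Theta$ (via multipliers $y_0 p\,\sign(x^\nu_{\bar\Ncal})|x^\nu_{\bar\Ncal}|^{p-1}$) for $\lambda\partial\phi$, with the same bounded/unbounded multiplier dichotomy and the same reductions of (b) and (c) to (a). Your explicit tracking of the multiplier sequences is exactly the bookkeeping the paper leaves implicit.
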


\begin{proof} (a) The proof is similar to the argument for \Cref{prop.ccp1}(a) by replacing the role of  $u^\nu\in\lambda\partial\phi(\bar x)$ with  $y_0 p\text{sign}(x_{\bar\Ncal}^\nu)|x_{\bar\Ncal}^\nu|^{p-1}\in N_\Theta(x^\nu)$ ($x^\nu$ 
is selected sufficiently close to $\bar x$ so that $\bar\Ncal \subset \Ncal(x^\nu)$) and considering the boundedness of $\{y_j^\nu\}, j\in\{0\}\cup \Ecal\cup\bar\Acal$ instead of 
$\{y_j^\nu\}, j\in\Ecal\cup\bar\Acal$.  Therefore, we skip the details of the proof. 

(b) The EMFCQ   for \eqref{eq.problem2} is equivalent to saying that the only solution for $v_1+v_2 = 0$ with $v_1\in N_\Theta(\bar x)$ and $v_2\in \Lambda_\Gamma(\bar x)$ 
is $v_1=v_2=0$.   Moreover, notice that the EMFCQ   for \eqref{eq.problem2} implies the EMFCQ for \eqref{eq.problem1}. Therefore,   we have from   \cite[Theorem 6.14]{rockafellar2009variational} 
 that $\Gamma$ is regular at $\bar x$ and $N_\Gamma(\bar x) = \Lambda_\Gamma(\bar x)$. This case then reverts to (a). 
 
 (c)  The proof is similar to the argument for \eqref{prop.ccp1}(c) by replacing the role of  $\lambda\partial\phi$ with  $N_\Theta$; therefore it is skipped. 
\end{proof}

\section{Convergence analysis using AKKT} 

The research on algorithms for solving general constrained problems involving  $\ell_p$ norms is so far limited. 
 To the best of our knowledge, only simple cases such as linearly constrained or convex set constrained cases have been studied. 
 When solving for general constrained problems with $\ell_p$ norm,  many works focus on the reformulations where the  
 constraint violation is penalized in the objective \cite{lu2014iterative, chartrand2008iteratively}.   
 We now extend the existing algorithms for solving unconstrained cases to general constrained cases  and prove the global convergence 
 by showing that  AKKT  conditions discussed in the previous 
 section are satisfied at the limit point.

We extend the framework of iteratively reweighted $\ell_1$ methods for solving unconstrained $\ell_p$ regularization problems \cite{2019Relating} to 
constrained cases \eqref{eq.problem1},   
\begin{equation}
	\min\ F(x ) := f_0(x )+ \lambda\|x \|^p_p \quad \text{ s.t. }\  f_j(x )\leq 0, \ \forall j\in\mathcal{I}; \  f_j(x )=0,\ \forall j\in\mathcal{E}. 
\end{equation}
where $f:\mathbb{R}^n\to\mathbb{R}$ is   continuously Lipschitz differentiable   with constant $L_f\geq 0$.  Here 
$\Gamma$ is assumed to be closed and  convex.

We first formulated a smooth approximation $F(x ; {\epsilon})$ of $F(x )$
$$F(x ;  {\epsilon}):=f_0(x )+\lambda\sum\limits_{i=1}^n (|x_i|+\epsilon_i)^p,$$
where $\epsilon\in\mathbb{R}^n_{++}$.
At $k$th iteration,   a convex local model to approximate $F(x; {\epsilon})$ is constructed 
$$G(x ;x ^k, {\epsilon}^k):= Q_k(x )+\lambda \sum\limits_{i=1}^n w(x^k_i,\epsilon_i^k)|x_i|$$
where the weights are given by $w(x^k_i,\epsilon_i^k)=p(|x^k_i|+\epsilon^k_i)^{p-1}$ and $Q_k(x )$ represents a local approximation mode to $f$ at $x ^k$ and is generally assumed to be smooth and convex. 

\begin{algorithm}[h] 
	\caption{General framework of iteratively reweighted $\ell_1$ approach}  
   \label{alg::irl1}  
   \begin{algorithmic}[1]  
	 \Require
		 $\alpha\in(0,1)$, $ {\epsilon}^0\in\mathbb{R}^n_{++}$ and $x ^0$.
	 \State Initialization: Set $k=0$.
	 \Repeat
		 \State (Reweighing) Compute $w^k_i={p}({|x^k_i|+\epsilon}^k_i)^{p-1}$, $i= 1,\cdots,n$. 
		   \State (Solving the Subproblem) $x ^{k+1}\leftarrow \mathop{\arg\min}\limits_{x \in\Gamma}\{Q_k(x )+\lambda \sum\limits_{i=1}^n w^k_i |x_i|\}$.  
		   
	   \State (Update $\epsilon$)  Set $ {\epsilon}^{k+1}\in(0,\alpha {\epsilon}^k )$.
	   \State Set $k\leftarrow k+1$ and go to step~$3$.
	   \Until{Convergence}
   \end{algorithmic}  
 \end{algorithm} 
As in  \cite{2019Relating}, we make the following assumptions about the choice of $(x ^0, {\epsilon}^0)$ and $Q_k(\cdot)$. 
\begin{assumption}\label{assumption tmp}
	The initial point $(x ^0, {\epsilon}^0)$ and local   model $Q_k(\cdot)$ are such that \\
	(i) The level set $\mathcal{L}(F^{0}):=\{x  \mid F(x ) \leq F^{0}\}$ is bounded where $F^{0}:=F(x ^{0};  {\epsilon}^{0})$.\\
	(ii) For all $k \in \mathbb{N}, \nabla Q_{k}(x ^k )=\nabla f(x ^k )$, $Q_{k}(\cdot)$ is strongly convex with constant $M>L_{f} / 2>$ 0.
	\end{assumption}

First, we show that $F(x , {\epsilon})$ is monotonically decreasing over the iterates $(x ^k, {\epsilon}^k)$. Define the following two terms
$$
\begin{aligned}
\Delta F(x ^{k+1},  {\epsilon}^{k+1}) &:=F(x ^k ,  {\epsilon}^k )-F(x ^{k+1},  {\epsilon}^{k+1}) \\
\Delta G(x ^{k+1} ; x ^k ,  {\epsilon}^k ) &:=G(x ^k  ; x ^k ,  {\epsilon}^k )-G(x ^{k+1} ; x ^k ,  {\epsilon}^k ) ,
\end{aligned}
$$
and use the shorthands $ W^k :=\operatorname{diag}\left(w_{1}^k , \ldots, w_{n}^k \right)$.
\begin{proposition}\label{prop.x-x}
	Suppose Assumption \ref{assumption tmp} holds. Let $\{(x ^k, {\epsilon}^k)\}$ be the sequence generated by Algorithm \ref{alg::irl1} . It follows that $F(x , {\epsilon})$ is monotonically decreasing over $\{(x ^k, {\epsilon}^k)\}$ and  
	\begin{equation}\label{monotonically}
		(M-\frac{L_f}{2})\sum\limits_{t=0}^{k-1}\|x ^{t+1}-x ^t\|^2_2 \le  F(x ^0, {\epsilon}^0)-F(x ^k, {\epsilon}^k). 
	\end{equation}
	Hence, $\lim\limits_{k\to\infty}\|x ^{k+1}-x ^k\|_2=0.$ 
\end{proposition}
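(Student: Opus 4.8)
The plan is to establish a one-step sufficient-decrease estimate, namely $F(x^k,\epsilon^k)-F(x^{k+1},\epsilon^{k+1})\ge\big(M-\tfrac{L_f}{2}\big)\|x^{k+1}-x^k\|_2^2$ for every $k$, and then sum it over $k$. The starting point is the concavity of $t\mapsto t^p$ on $(0,\infty)$ (recall $0<p<1$): for each coordinate $i$, with $s=|x_i^{k+1}|+\epsilon_i^k$ and $t=|x_i^k|+\epsilon_i^k$, concavity gives $s^p\le t^p+pt^{p-1}(s-t)$, i.e.
\[
(|x_i^{k+1}|+\epsilon_i^k)^p\le(|x_i^k|+\epsilon_i^k)^p+w_i^k\big(|x_i^{k+1}|-|x_i^k|\big).
\]
Since $\epsilon^{k+1}\in(0,\alpha\epsilon^k)$ is componentwise smaller than $\epsilon^k$ and $t\mapsto t^p$ is increasing, one also has $(|x_i^{k+1}|+\epsilon_i^{k+1})^p\le(|x_i^{k+1}|+\epsilon_i^k)^p$. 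Summing over $i$, scaling by $\lambda$, and adding $f_0(x^{k+1})$ yields
\[
F(x^{k+1},\epsilon^{k+1})\le f_0(x^{k+1})+\lambda\sum_i w_i^k|x_i^{k+1}|+\Big(F(x^k,\epsilon^k)-f_0(x^k)-\lambda\sum_i w_i^k|x_i^k|\Big).
\]

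Next I would use the subproblem optimality. Since $Q_k$ is $M$-strongly convex (Assumption \ref{assumption tmp}(ii)) and the reweighted $\ell_1$ term is convex, $G(\cdot\,;x^k,\epsilon^k)$ is $M$-strongly convex; as $x^{k+1}$ is its minimizer over the convex set $\Gamma$ and $x^k\in\Gamma$, this gives
\[
Q_k(x^{k+1})+\lambda\sum_i w_i^k|x_i^{k+1}|+\tfrac{M}{2}\|x^{k+1}-x^k\|_2^2\le Q_k(x^k)+\lambda\sum_i w_i^k|x_i^k|.
\]
Plugging this into the bound from the previous paragraph to cancel the reweighted $\ell_1$ terms leaves
\[
F(x^{k+1},\epsilon^{k+1})\le F(x^k,\epsilon^k)+\big(f_0(x^{k+1})-f_0(x^k)\big)+\big(Q_k(x^k)-Q_k(x^{k+1})\big)-\tfrac{M}{2}\|x^{k+1}-x^k\|_2^2.
\]

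I would then control the two bracketed differences by the curvature hypotheses. The descent lemma for the $L_f$-Lipschitz gradient of $f_0$ gives $f_0(x^{k+1})-f_0(x^k)\le\langle\nabla f_0(x^k),x^{k+1}-x^k\rangle+\tfrac{L_f}{2}\|x^{k+1}-x^k\|_2^2$, while $M$-strong convexity of $Q_k$ together with $\nabla Q_k(x^k)=\nabla f_0(x^k)$ gives $Q_k(x^k)-Q_k(x^{k+1})\le-\langle\nabla f_0(x^k),x^{k+1}-x^k\rangle-\tfrac{M}{2}\|x^{k+1}-x^k\|_2^2$. Adding them, the gradient terms cancel and we obtain $F(x^k,\epsilon^k)-F(x^{k+1},\epsilon^{k+1})\ge(M-\tfrac{L_f}{2})\|x^{k+1}-x^k\|_2^2\ge0$, since $M>L_f/2$; this is the claimed monotonicity, and telescoping from $t=0$ to $k-1$ gives \eqref{monotonically}. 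Finally, the iterates satisfy $F(x^k)\le F(x^k,\epsilon^k)\le F^0$, so $\{x^k\}\subset\mathcal{L}(F^0)$, which is compact by Assumption \ref{assumption tmp}(i); hence $F$ is bounded below along $\{x^k\}$, the right-hand side of \eqref{monotonically} is bounded in $k$, the series $\sum_t\|x^{t+1}-x^t\|_2^2$ converges, and therefore $\|x^{k+1}-x^k\|_2\to0$.

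The step I expect to be the main obstacle is pinning down the sharp constant $M-\tfrac{L_f}{2}$ rather than the weaker $\tfrac12(M-L_f)$ that a careless argument would produce: this requires invoking the $M$-strong convexity of $Q_k$ \emph{twice} --- once through the minimizer inequality for $x^{k+1}$ (contributing $\tfrac{M}{2}$) and once through the quadratic lower bound of $Q_k$ at $x^k$ (contributing a second $\tfrac{M}{2}$, which is what absorbs the $-\tfrac{L_f}{2}$ from the descent lemma). The rest --- chaining the concavity majorization, the monotonicity of $\{\epsilon^k\}$, and the subproblem optimality with the signs tracked correctly --- is routine bookkeeping.
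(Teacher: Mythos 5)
Your proof is correct and follows essentially the same route as the paper: a per-iteration sufficient-decrease bound obtained from the concavity majorization of the $\ell_p$ term, the $M$-strong convexity of the subproblem (used once through the minimizer inequality and once through the quadratic lower bound, exactly as you flag), and the descent lemma, followed by telescoping. The only difference is organizational — the paper delegates the inequality $\Delta F \ge \Delta G + \tfrac{M-L_f}{2}\|x^k-x^{k+1}\|_2^2$ to a citation of \cite[Proposition 1]{2019Relating} and verifies $\Delta G \ge \tfrac{M}{2}\|x^k-x^{k+1}\|_2^2$ via the explicit subgradient/normal-cone optimality condition, whereas you inline both steps and use the minimizer form of strong convexity directly.
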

\begin{proof} From the same argument as the proof for \cite[Proposition 1]{2019Relating}, we have 
\begin{equation}\label{monotonically 5}
	\Delta F(x ^{k+1},  {\epsilon}^{k+1}) \geq \Delta G(x ^{k+1} ; x ^k ,  {\epsilon}^k )+\frac{M-L_{f}}{2}\|x ^k -x ^{k+1}\|_{2}^{2}.
\end{equation}
Assumption \ref{assumption tmp} implies the subproblem solution $x ^{k+1}$ satisfies the optimality condition 
\begin{equation}\label{monotonically 3}
	\nabla Q_k(x ^{k+1})+\lambda W^k {y}^{k+1}+ {z}^{k+1}=0
\end{equation}
with $ {z}^{k+1}\in N_{\Gamma}(x ^{k+1})$ and $ {y}^{k+1}\in\partial \|x ^{k+1}\|_1$. Hence,
\begin{equation}\label{monotonically 4}
	\begin{aligned}
		&\ \Delta G(x ^{k+1} ; x ^k ,  {\epsilon}^k ) \\
		=&\ Q_{k}(x ^k )-Q_{k}(x ^{k+1})+\lambda \sum_{i=1}^{n} w_{i}^k (|x_{i}^k |-|x_{i}^{k+1}|) \\
		\geq &\ \langle\nabla Q_{k}(x ^{k+1}),x ^k -x ^{k+1}\rangle +\frac{M}{2}\|x ^{k+1}-x ^k \|_{2}^{2}+\lambda \sum_{i=1}^{n} w_{i}^k  y_{i}^{k+1}(x_{i}^k -x_{i}^{k+1})+z^{k+1}_i(x_{i}^k -x_{i}^{k+1}) \\
		=&\ \langle\nabla Q_{k}(x ^{k+1})+\lambda W^k   {y}^{k+1}+ {z^{k+1}},x ^k -x ^{k+1}\rangle+\frac{M}{2}\|x ^{k+1}-x ^k \|_{2}^{2} \\
		=&\ \frac{M}{2}\|x ^{k+1}-x ^k \|_{2}^{2},
		\end{aligned}
\end{equation}
where the inequality is by Assumption \ref{assumption tmp}, the convexity of $|\cdot|$ and the definition of normal cone, and the last equality is by \eqref{monotonically 3}.  
Combining \eqref{monotonically 5} and \eqref{monotonically 4}, we have 
\begin{equation}
	\Delta F(x ^{k+1},  {\epsilon}^{k+1}) \geq (M-\frac{L_{f}}{2})\|x ^k -x ^{k+1}\|_{2}^{2}.
\end{equation}
Replacing $k$ with $t$ and summing up from $t=0$ to $k-1$, we have 
$$\sum\limits_{t=0}^{k-1}(F(x ^t, {\epsilon}^t)-F(x ^{t+1}, {\epsilon}^{t+1}))\geq (M-\frac{L_{f}}{2})\sum\limits_{t=0}^{k-1}\|x ^t-x ^{t+1}\|_2^2,$$
completing the proof of \eqref{monotonically}.  
\end{proof}

Now we prove that every cluster point $\bar x$ of  $\{x ^k\}$ generated by Algorithm \ref{alg::irl1}  satisfied the AKKT. As a result, it satisfies the    first-order necessary optimality condition of \eqref{eq.problem1}  by \Cref{Px AKKT } if the CCP condition holds at $\bar x$. 
\begin{theorem} 
Suppose $\{x ^k\}$ is the sequence generated by Algorithm \ref{alg::irl1}  for solving \eqref{eq.problem1}.  It holds that every cluster point 
of $\{ x^k\}$  satisfies the AKKT  condition.
\end{theorem}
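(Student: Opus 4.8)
The plan is to read off the AKKT multipliers directly from the subproblem optimality condition \eqref{monotonically 3}, evaluated along a subsequence of the iterates converging to $\bar x$. Fix a cluster point $\bar x$ of $\{x^k\}$ and an index set $K\subseteq\mathbb{N}$ with $x^k\to\bar x$ as $k\to\infty$, $k\in K$. By \Cref{prop.x-x} we have $\|x^{k+1}-x^k\|_2\to 0$, so $x^{k+1}\to\bar x$ along $K$ too; since each $x^{k+1}\in\Gamma$ and $\Gamma$ is closed, $\bar x\in\Gamma$. If $\bar x=0$ then $\bar\Ncal=\emptyset$ and the AKKT holds vacuously, so assume $\bar\Ncal\neq\emptyset$. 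I will use $x^\nu:=x^{k+1}$ (indexed over $k\in K$) as the AKKT test sequence and $z^\nu:=z^{k+1}\in N_\Gamma(x^{k+1})$ as the constraint multiplier, recalling that for closed convex $\Gamma$ the AKKT and ECCP are phrased with $N_\Gamma$ in place of $\Lambda_\Gamma$.

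First I would pass to the limit along $K$ in \eqref{monotonically 3}, componentwise over $i\in\bar\Ncal$, using three facts. (i) Since $\epsilon^{k+1}\in(0,\alpha\epsilon^k)$ with $\alpha\in(0,1)$, one has $\epsilon^k\searrow 0$ geometrically. (ii) For $i\in\bar\Ncal$, $|x_i^k|\to|\bar x_i|>0$, hence the weight $w_i^k=p(|x_i^k|+\epsilon_i^k)^{p-1}\to p|\bar x_i|^{p-1}$; moreover $x_i^{k+1}\to\bar x_i\neq 0$, so for all large $k\in K$ the $i$-th component of $y^{k+1}\in\partial\|x^{k+1}\|_1$ is $\sign(x_i^{k+1})=\sign(\bar x_i)$, the unique subgradient of $|\cdot|$ at a nonzero point; therefore $\lambda w_i^k y_i^{k+1}\to\lambda p|\bar x_i|^{p-1}\sign(\bar x_i)$ and $\lambda p\,\sign(x_i^{k+1})|x_i^{k+1}|^{p-1}$ tends to the same limit, so their difference tends to $0$. (iii) $\nabla_i Q_k(x^{k+1})-\nabla_i f(x^{k+1})\to 0$: by \Cref{assumption tmp}(ii) and continuity of $\nabla f$, $\nabla Q_k(x^k)=\nabla f(x^k)\to\nabla f(\bar x)$, while under the standing assumptions on the models (as in \cite{2019Relating}) $\nabla Q_k$ is Lipschitz with a modulus uniform in $k$, so $\nabla Q_k(x^{k+1})-\nabla Q_k(x^k)=O(\|x^{k+1}-x^k\|_2)\to 0$ and hence $\nabla Q_k(x^{k+1})\to\nabla f(\bar x)$.

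Combining these, I would rewrite \eqref{monotonically 3} restricted to $i\in\bar\Ncal$ as
\begin{multline*}
\nabla_i f(x^{k+1})+\lambda p\,\sign(x_i^{k+1})|x_i^{k+1}|^{p-1}+z_i^{k+1} \\
=\big(\nabla_i f(x^{k+1})-\nabla_i Q_k(x^{k+1})\big)+\big(\lambda p\,\sign(x_i^{k+1})|x_i^{k+1}|^{p-1}-\lambda w_i^k y_i^{k+1}\big),
\end{multline*}
whose right-hand side tends to $0$ along $k\in K$ by (ii)--(iii), with $z^{k+1}\in N_\Gamma(x^{k+1})$. This is exactly the AKKT condition for \eqref{eq.problem1} at $\bar x$; together with \Cref{Px AKKT }(a), $\bar x$ then satisfies the KKT conditions whenever the ECCP holds there.

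The hard part is (iii): the subproblem optimality is written at $x^{k+1}$, whereas the only exact identity available, $\nabla Q_k(x^k)=\nabla f(x^k)$, lives at the previous iterate, so it must be transported across the step. This is precisely where $\|x^{k+1}-x^k\|_2\to 0$ from \Cref{prop.x-x} is indispensable, but it must be paired with a uniform upper bound on the curvature of the models $Q_k$ (available from the quadratic models of \cite{2019Relating}); strong convexity alone only yields a one-sided estimate on $\langle\nabla Q_k(x^{k+1})-\nabla Q_k(x^k),\,x^{k+1}-x^k\rangle$ and does not suffice. The remaining steps are routine limiting arguments; the one bookkeeping point to keep in mind is that all limits are taken along the single subsequence $K$ and only for coordinates in $\bar\Ncal$, which is all the AKKT definition requires.
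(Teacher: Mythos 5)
Your proof is correct and follows essentially the same route as the paper: both read the AKKT multipliers off the subproblem optimality condition at $x^{k+1}$ and drive the residual to zero using $\|x^{k+1}-x^k\|_2\to 0$ from \Cref{prop.x-x} together with $\epsilon^k\searrow 0$ and $|\bar x_i|>0$ for $i\in\bar\Ncal$. The only substantive difference is that the paper silently specializes to the quadratic model, writing $\nabla Q_k(x^{k+1})=\nabla f(x^k)+M(x^{k+1}-x^k)$ and using a mean-value bound on $t\mapsto t^{p-1}$, whereas you keep $Q_k$ general and invoke a uniform Lipschitz bound on $\nabla Q_k$ --- precisely the upper curvature bound you correctly observe is not supplied by \Cref{assumption tmp} alone.
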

\begin{proof}  Let $\bar x$ be a cluster point of $\{ x^k\}$ with subsequence $\{ x^k\}_S \to \bar x$.  
By \Cref{prop.x-x}, $\{ x^{k+1}\}_S \to \bar x$.   From the optimality 
condition of the subproblem of   Algorithm \ref{alg::irl1} , we have 
	\begin{equation}
	\nabla_i f(x ^k) +M(x^{k+1}_i-x^k_i)+\lambda p(|x^k_i|+\epsilon^k_i)^{p-1}\sign(x^{k+1}_i)+z^{k+1}_i=0,\quad \forall i\in\Ncal^{k+1},
	\end{equation}
	where $ {z}^{k+1}\in N_{\Gamma}(x ^{k+1})$.
	For any $i \in \Ncal^{k+1}$,
	\begin{equation}\label{convergence tmp}
		\begin{aligned}
		&\ | \nabla_i f(x ^{k+1}) +\lambda p|x^{k+1}_i|^{p-1}\sign(x^{k+1}_i)+z^{k+1}_i|\\
		=&\ |(\nabla_i f(x ^{k+1})-\nabla_i f(x ^{k }))+\lambda p\sign(x^{k+1}_i)(|x^{k+1}_i|^{p-1}-(|x^k_i|+\epsilon^k_i)^{p-1})-M(x^{k+1}_i-x^k_i)|\\
		\leq &\ (L_f-M)|x^{k+1}_i-x^k_i|+\lambda p(1-p)|\hat x^k _i|^{p-2}||x^{k+1}_i|-|x^k_i|-\epsilon^k_i|\\
 		\end{aligned}
	\end{equation}
	where $\hat x^k _i$ is between $|x^{k+1}_i|^{p-1}$ and $(|x^k_i|+\epsilon^k_i)^{p-1}$. 
	For sufficiently large $k$,  $|\hat x^k _i|^{p-2} > \delta > 0$,  $\Ncal(\bar x) \subset \Ncal(x^k)$ and $\Ncal(\bar x)\subset \Ncal(x^{k+1})$. 
	Therefore, for any $i\in\Ncal(\bar x)$, 
\begin{align*}
       & \  | \nabla_i f(x ^{k+1}) +\lambda p|x^{k+1}_i|^{p-1}\sign(x^{k+1}_i)+z^{k+1}_i|\\
\leq & \   (L_f-M)|x^{k+1}_i-x^k_i|+\lambda p(1-p)\delta^{p-2}|x^{k+1}_i-x^k_i| +\lambda p(1-p)\delta^{p-2} \epsilon^k_i  
 \to   0
 \end{align*} 
			as $k\to\infty$. 
This completes the proof.
\end{proof}

\section{Numerical experiment}

In this section, we design numerical experiments to demonstrate the   performance of  Algorithm \ref{alg::irl1}   for solving the   \eqref{eq.problem1}  with  application   in portfolio management, where    $f$ can include various  loss functions such as  the variance of portfolio or tracking error of index tracking. Here we use the   commonly seen Markowitz mean-variance model to predict an optimal portfolio. Specifically, we only consider the shorting-prohibited Markowitz model and assume the optimal Lagrangian multiplier associated with the mean constraint is known as $\eta$ which can be chosen accordingly to   a reasonable expected return rate.

In the experiment, we collected historical daily stock price data to obtain $R $ and $m$ in $S\&P \ 500$ index from  Yahoo finance\footnote{\url{finance.yahoo.com}}, which spans from $01/01/2013$ to $31/ 12/2013$. We do not include any company unless it is traded on the market at least $90\%$ of the trading days during the data period, nor do any company not listed on the market for the entire timescale. The total list has 471 companies by 251 trading days. We choose $\eta = 0.001$ and $\lambda = 0.001$, and recast the Markowitz model with no-shorting constraint as a linear equality constrained optimization problem with $\ell_{\frac{1}{2}}$ regularizer,
\begin{equation}\label{portfolio}
\begin{aligned}
\min\quad  &\frac{1}{2} x^{T} R  x- \eta \mu^{T} x +\lambda \|x\|^{\frac{1}{2}}_{\frac{1}{2}}\\
\text { s.t.}\quad & e^{T} x=1, \\
& x \geq 0
\end{aligned}
\end{equation}
where $R \in \mathbb{R}^{n\times n}$ is the estimated covariance matrix of the portfolio, and $  \eta \mu$ is  the   estimated return vector  with $\eta > 0$ and $\mu\in\mathbb{R}^n$.

In \Cref{alg::irl1}, we use $Q_k(x) = (R x^k  - \eta \mu)^Tx+\frac{\beta}{2}\|x-x^k\|_2^2$.   The parameters are selected as 
$\alpha=0.998$, $\beta=1.1 L_f$, $ {\epsilon}^0=0.001$ and the  initial point is  $x ^0=\frac{1}{n}e$ so that it is feasible. 
 As mentioned in the last section, AKKT conditions   are satisfied at the clustering point and the ECCP holds true by \Cref{prop.ccp1}(c) since $\Gamma$ here is a closed and convex set. Therefore, from \Cref{Px AKKT }(a), the KKT condition of \eqref{portfolio} is satisfied at the clustering point generated by the algorithm.

After solving each subproblem, we obtain a primal feasible iterate $x^{k+1}$ and dual iterate $\nu^k$ satisfying 
the optimality condition for the subproblem 
\begin{equation}
	(R_i x^k-\eta\mu)^Tx^{k+1} +\beta(x^{k+1}_i-x^{k}_i)+\lambda w^k_i+\nu^k=0,\ i\in\Ncal(x^{k+1}), 
\end{equation}
where $R_i$ is the $i$th row of $R$. 
For any primal-dual pair $(x,\nu)$ with $x\geq 0$, we can define the following metric to measure the optimality residual at $(x,\nu)$ 
  \begin{equation}
	  \alpha(x,\nu):=\sum\limits_{i\in\mathcal{N}(x)}|x^TR _i-c_i+\lambda p x_i^{p-1}+\nu|, 
  \end{equation}
  where $\nu$ is the dual variable associated with  the simplex constraint.

\begin{figure}[htbp]
	\centering
	\includegraphics[width=0.7\linewidth]{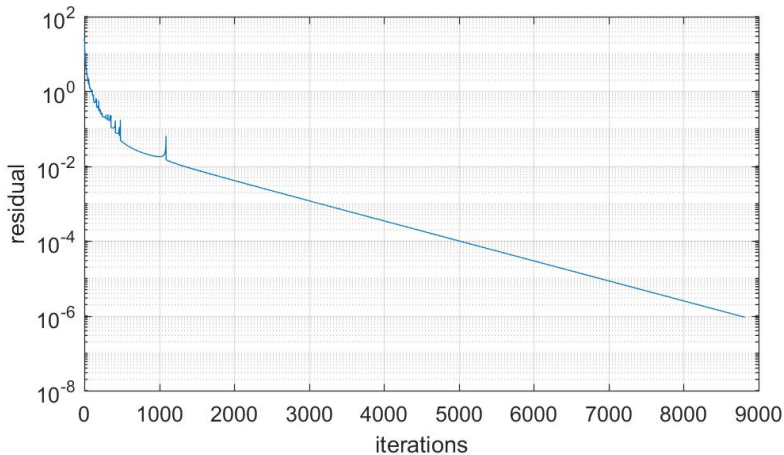}
	\caption{The residual $\alpha(x^k,\nu^k)$ generated by \Cref{alg::irl1} } \label{figure:residual}
  \end{figure}

We plot the evolution of $\alpha(x^k, \nu^k)$ over iterations in 
\Cref{figure:residual}, which steadily decreases to 0.  Figure \ref{sparsity} shows the number of nonzero components of the portfolio versus the regularization parameter $\lambda$ by fixing $\eta=0.001$.  
We can see that larger $\lambda$ can yield   sparser  solutions. 
\begin{figure}[htbp]
	\centering
	\includegraphics[width=0.8\textwidth]{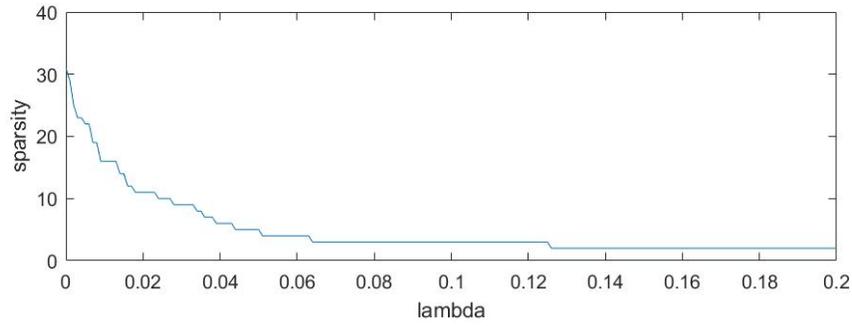}
	\caption{Portfolio sparsity (nonzero components) for different $\lambda$}\label{sparsity}	
\end{figure}

For out-of-sample testing, we collected historical daily stock price data in $S\&P \ 500$ index from  Yahoo finance, which spans from $01/01/2014$ to $31/03/2014$. 
Figure \ref{sharpe} shows the Sharpe ratios of our $\ell_p$-norm regularized portfolio. The sparse portfolios are more implementable due to the transaction costs or physical limitations reasons. Our results indicate that an intermediate sparsity (around 10) portfolio may have the best   Sharpe ratio  in both in-sample or out-of-sample performance.   
\begin{figure}[htbp]
	\centering
	\includegraphics[width=0.6\textwidth]{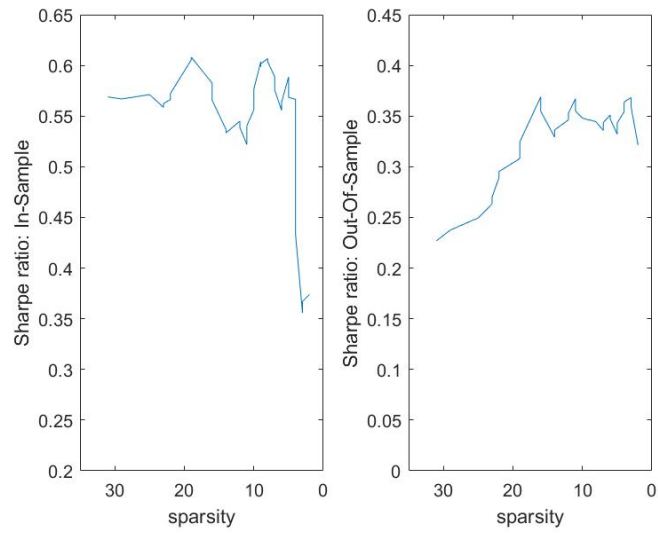}
	\caption{Portfolio Sharpe ratios for different sparsity: in-sample and out-of-sample}\label{sharpe}	
\end{figure}

\bibliography{references}
\end{document}